\DeclareFontFamily{OMX}{lmex}{}
\DeclareFontShape{OMX}{lmex}{m}{n}{%
   <-> lmex10
   }{}
\setlist[enumerate,1]{label=(\roman*), ref=\roman*}
\setlist[enumerate,2]{label=\alph*., ref=\theenumi.\alph*}
\setlist[enumerate,3]{label=\arabic*., ref=\theenumii.\arabic*}
\definecolor{midnightblue}{rgb}{0.1, 0.1, 0.44}
\begin{document}

\title{Linear Lipschitz and $C^1$ extension operators through random projection}
\author{
	Elia Bruè\thanks{
	Scuola Normale Superiore, Pisa, \url{elia.brue@sns.it}}
	\and
	Simone Di Marino\thanks{
	Indam, Unità SNS, Pisa, \url{simone.dimarino@altamatematica.it}}
	\and
	Federico Stra\thanks{
	Scuola Normale Superiore, Pisa, \url{federico.stra@sns.it}}
}

\maketitle

	\begin{abstract}
	We construct a regular random projection of a metric space onto a closed doubling subset and use it to linearly extend Lipschitz and $C^1$ functions. This way we prove more directly a result by \textcite{lee-naor-05} and we generalize the $C^1$ extension theorem by \textcite{whitney} to Banach spaces.
\end{abstract}

\tableofcontents

%{\color{red}\paragraph{Document structure}
%\begin{itemize}
%\item Whitney-type partitions
%\begin{itemize}
%	\item definition of capacity and points $x_i^n\in X$
%	\item construction of the partition with Voronoi cells, corresponding Lipschitz partition of unity and slope estimate
%	\item $C^1$ partition of unity in a smooth Banach space with slope estimate
%\end{itemize}
%\item random projections
%\begin{itemize}
%	\item ``Lipschitz'' random projection, in reality we only bound the slope
%	\item $C^1$ random projection
%\end{itemize}
%\item linear extensions
%\begin{itemize}
%	\item Lipschitz extension (with trick $Y\to C_b(Y)$)
%	\item $C^1$ and $C^{1,1}$ extensions
%\end{itemize}
%\end{itemize}}

\section{Introduction}
\label{sec:introduction}

The aim is to provide extension theorems $\Lip(X;Z)\to\Lip(Y;Z)$ where $X\subset Y$ is a closed subset of a complete metric space $(Y,\dist)$ and $Z$ is a Banach space, under hypotheses just on the space $X$ alone and not on the ambient space $Y$.

In \cite{lee-naor-05} the authors provide the following extension theorem for Lipschitz functions in a metric setting.

\begin{theorem*}[\textcite{lee-naor-05}]
	Let $X\subset(Y,\dist)$ be a doubling metric space with doubling constant $\lambda_X$. Then there is an extension $T:\Lip(X;Z)\to\Lip(Y;Z)$ such that
	\[
	\Lip(Tf) \leq C \log(\lambda_X) \Lip(f) \qquad \forall f\in\Lip(X;Z),
	\]
	where $C$ is a universal constant.
\end{theorem*}

Our goal is to obtain more directly the previous result, through a simpler proof based on ideas appearing in \cite{johnson-lindenstrauss-schechtman}. See also \cite{lee-naor-04,ohta} for related discussions. With this method, we can provide also a $C^1$ extension result in the spirit of Whitney \cite{whitney}.

In Section \ref{subsec:lipext} we also have a simple and very short proof of the Lipschitz extension result which is self-contained and based solely on the existence of a doubling measure on a doubling space.

The main theorems are \autoref{thm:lipschitz-extension} for the Lipschitz extension and \autoref{thm:whitney} for the $C^1$ extension respectively. The structure is as follows: in \autoref{sec:partitions} we contruct partitions of unity, both in the Lipschitz and $C^1$ version; in \autoref{sec:random-projections} we use these partitions to build Lipschitz and $C^1$ random projections of a space onto a subspace and finally in \autoref{sec:linear-extensions} we prove the extension theorems using the previously developed tools.

\subsection{Notation and preliminaries}

Let $(X,\dist)$ be a complete metric space. We will denote with $B(x,r)$ the open ball of radius $r$, centered at $x$ and, for $A \subseteq X$, we define $\dist(x, A)= \inf \set{ \dist(x,x') }{ x' \in A }$.

We will denote by $\Lip(X;Z)$ the set of Lipschitz functions with values in $Z$; if the second space is dropped it means that $Z=\setR$. Moreover, given $f \in \Lip(X;Z)$, we denote by $\Lip(f)$ the least Lipschitz constant for the function $f$.
We make use of the notion of slope of a function $f: X \to \setR$ defined as
\[
\abs{\nabla f} (x) = \limsup_{y \to x} \frac{ \abs{f(y)-f(x)}}{\dist(y,x)}.
\]

We will be dealing with measures supported in metric spaces: we denote by $\Prob(X)$ the set of Borel probability measures on $X$, with $\Meas_+(X)$ the set of finite nonnegative Borel measures on $X$ and with $\Meas(X;Z)$ the set of vector valued measures with finite total variation. As before, if the second spaces is omitted then $Z=\setR$ and so it will reduce to the space of signed measures.

Of crucial importance in the sequel will be the $W_1$ Wasserstein distance. We recall here just the dual representation instead of the direct one because it will be the more relevant for the further development. %We define it in an extended way, not restricted to probabilities with finite first moment $\Prob_1(X)=\set*{\mu\in\Prob(X)}{\int_X\dist(x,x_0)\d\mu(x), \text{ for some $x_0\in X$}}$, to allow for more flexibility.
We will define it as usual on $\Prob_1(X)$, where
 $$\Prob_1(X)=\set*{\mu\in\Prob(X)}{\int_X\dist(x,x_0)\d\mu(x), \text{ for some $x_0\in X$}}.$$

\begin{definition}[Wasserstein distance]\label{def:w1} 
	Let $\mu_1, \mu_2 \in \Prob_1(X)$. Then we define
	\[
	W_1(\mu, \nu) = \sup \set*{
		\int_X f \d \mu - \int_X f \d \nu
	}{
		f \in \Lip(X),\ \Lip(f) \leq 1 }.
	\]
\end{definition}%allowing it to be possibly infinite.

\begin{remark}
	%Notice that $W_1(\mu_1, \mu_2)$ does not need to be finite. However it is finite whenever $\mu_1,\mu_2\in\Prob_1(X)$: this is the case for example when $\mu_1$ and $\mu_2$ have both bounded support, as it will be common in the sequel. A useful inequality that follows directly from the definition is
	Notice that there is no harm in defining $W_1$ only on $\Prob_1(X)$, since in the sequel we will deal only with probabilities with bounded supports, which clearly belong to $\Prob_1(X)$. A useful inequality that follows directly from the definition is
	\begin{equation}\label{eqn:w1lip}
	\abs*{ \int_X f \d\mu_1 - \int_X f \d\mu_2 }
	\leq \Lip(f) W_1(\mu_1, \mu_2).
	\end{equation}
\end{remark}

Throughout the paper we use the notation $\lesssim$ to omit a universal constant not depending on $X$, $Y$, the doubling constant $\lambda$ or anything of this sort. We will use two notion of dimensionality of a metric space: the \emph{doubling constant} and the \emph{metric capacity}.

\begin{definition}[Doubling metric space]\label{def:doubling}
	$(X,\dist)$ is a doubling metric space if there exists $\lambda \in \mathbb{N}$ such that  every ball of radius $2r$ can be covered with at most $\lambda$ balls of radius $r$. The least such constant is $\lambda_X$, the doubling constant of $X$\footnote{In the sequel we will drop the dependence on $X$ when there is no room for confusion}.
\end{definition}

\begin{definition}[Metric capacity]\label{def:capacity}
	Given a metric space $(X,\dist)$ we define the \emph{metric capacity}\footnote{For short, just \emph{capacity} in the sequel.} $\kappa_X:(0,1]\to\setN\cup\{\infty\}$ as
	\[
	\kappa_X(\eps) = \sup\set*{k}
	{\exists x_0,\dotsc,x_k\in X,\ \exists r>0
		\text{ s.t. }
		\bigsqcup_{i=1}^k B(x_i,\eps r)\subset B(x_0,r)},
	\]
where the notation $\bigsqcup_{i=1}^k B(x_i,\eps r)$ indicate a \emph{disjoint} union of balls.

\end{definition}

It can be verified that if $\kappa_X(\eps)<\infty$ for some $\eps < 1/3$, then $\kappa_X(t)$ is finite for every $t \in (0,1]$. Even if it is true that $X$ has a finite doubling constant iff $X$ has a finite metric capacity, it is more natural to use the latter in some of the constructions. However since we want the final result to depend only on the doubling constant of $X$, we will make use of the following proposition comparing $\lambda$ and $\kappa$.

\begin{proposition}[Comparing $\kappa$ and $\lambda$]\label{prop:est}
	Let $X$ be a metric space. Then we have that
	\begin{enumerate}
		\item $\lambda \leq \kappa_X(1/5)$;
		\item $\kappa_X(\eps) \leq \lambda^k$ whenever $\frac 1{2^k} < \eps \leq \frac 1{2^{k-1}}$.
	\end{enumerate}
\end{proposition}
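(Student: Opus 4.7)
The plan is to use a packing/covering duality separately for each part. For (i), I would cover any ball $B(x_0, 2r)$ by using a maximal $r$-separated family $\{x_1,\dots,x_N\} \subset B(x_0, 2r)$. By maximality every point of $B(x_0, 2r)$ lies within distance $r$ of some $x_i$, so $\{B(x_i, r)\}_{i=1}^N$ is a covering and in particular the doubling number satisfies $\lambda \leq N$. On the other hand, the separation $\dist(x_i,x_j)\geq r$ forces the halved balls $B(x_i, r/2)$ to be pairwise disjoint, and they all fit inside $B(x_0, 5r/2)$. Writing $r/2 = (1/5)\cdot(5r/2)$ then shows $N \leq \kappa_X(1/5)$. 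Since this holds for every ball of radius $2r$, part (i) follows.

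For (ii), I would iterate the doubling property $k$ times: $B(x_0, r)$ is covered by $\lambda$ balls of radius $r/2$, each of these by $\lambda$ balls of radius $r/4$, and so on, producing a cover of $B(x_0, r)$ by $\lambda^k$ balls of radius $r/2^k$. Now take centers $x_1, \dots, x_N$ with the balls $B(x_i, \eps r)$ pairwise disjoint and contained in $B(x_0, r)$. The key pigeonhole observation is that each small ball $B(y, r/2^k)$ of the cover contains at most one $x_i$: indeed, if two distinct $x_i, x_{i'}$ both belonged to $B(y, r/2^k)$, then since $r/2^k < \eps r$ (using the strict inequality $\eps > 1/2^k$), the point $y$ would belong to both $B(x_i, \eps r)$ and $B(x_{i'}, \eps r)$, contradicting disjointness. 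Hence $N \leq \lambda^k$.

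The only genuinely quantitative step is the choice of the enlargement factor $5/2$ in (i), which is exactly what yields the constant $1/5$; the upper half-radius $r/2$ is needed to turn separation into disjointness, while the extra $r/2$ of enlargement absorbs centers near $\partial B(x_0,2r)$. Part (ii) is then essentially bookkeeping of the doubling iteration combined with a one-line pigeonhole. I do not anticipate a real obstacle: both bounds are just the standard comparison between packing and covering numbers, tuned to the precise thresholds appearing in Definitions~\ref{def:doubling} and~\ref{def:capacity}.
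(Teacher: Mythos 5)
Your proof is correct and follows essentially the same route as the paper: part (i) is the standard packing/covering duality via a maximal separated (resp.\ disjoint) family tuned to the threshold $1/5$, and part (ii) is the iterated doubling cover combined with the one-point-per-small-ball pigeonhole, differing from the paper only in the normalization of the radii.
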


\begin{proof} Considering a maximal family $\mathcal{F}= \{B(x_i, \eps r )\}_{i \in I}$ of disjoint balls contained in $B(x_0,r)$ we have $\abs{\mathcal{F}}\leq \kappa(\eps)$ and moreover $B\bigl(x_0, (1-\eps) r\bigr) \subseteq \cup_i B(x_i, 2 \eps r)$. Choosing $\eps=1/5$ and thanks to the arbitrariness of $r$ and $x_0$ we get that $\lambda \leq \kappa_X(1/5)$.
	
	%
	%
	%For the first inequality let us consider any point $x_0$ and any radius $r>0$ and a maximal set of $x_i$ such that the family $\mathcal{F}= \{B(x_i, \eps r )\}_{i \in I}$ is composed of disjoint balls contained in $B(x_0,r)$. Then $|\mathcal{F}| \leq \kappa_X (\eps)$. Moreover for every $y \in B(x_0,(1-\eps)r)$ we have $B(y, \eps r) \subseteq B(x_0, r)$ and so, by the maximality of $\mathcal{F}$ we have $B(y,\eps r) \cap B(x_i, \eps r) \neq \emptyset$ and so $y \in B(x_i, 2 \eps r)$. This shows that
	%$$ B(x_0, (1-\eps) r) \subseteq \cup_i B(x_i, 2 \eps r).$$
	%Choosing $\eps=1/5$ and thanks to the arbitrariness of $r$ we get that $\lambda \leq \kappa_X(1/5)$.
	
	In order to prove the second inequality we first observe that for we can cover $B(x_0, 2^k r)$ with less than $\lambda^k$ balls of radius $r$: let us consider $\mathcal{F}' = \{ B(y_i, r)\}$ such a family. Let $\frac 1{2^k} < \eps \leq \frac 1{2^{k-1}}$ and  $\mathcal{F} = \{B(x_i, \eps 2^k r)\}$ be a disjoint family of balls  contained in $B(x_0,2^k r)$. It is now easy to see that $B(y_i,r)$ can contain at most one $x_i$; then we have $\abs{\mathcal{F}} \leq \abs{\mathcal{F}'} \leq \lambda^k$ and so $\kappa_X(\eps) \leq \lambda^k $.
	
	%
	%In order to prove the second inequality we first observe that for we can cover $B(x_0, 2^k r)$ with less than $\lambda^k$ balls of radius $r$: let us consider $\mathcal{F}' = \{ B(y_i, r)\}$ such a family. Now let us fix $2^{-k} <\eps \leq 2^{1-k}$ and let us consider a family of disjoint balls $\mathcal{F} = \{B(x_i, \eps 2^k r)\}$ that are always contained in $B(x_0,2^k r)$. Now let us suppose that there are two distinct indices $i, j$ such that there exists $x_i, x_j \in B(y_k,r)$ for some $k$. But then we would have
	%$$y_k \in B(x_i,  r) \cap B(x_j,  r)  \subseteq B(x_i, \eps 2^k r) \cap B(x_j, \eps 2^k r),$$
	%thus contradicting the disjointness of the family $\mathcal{F}$. In particular we have $| \mathcal{F}| \leq | \mathcal{F}'| \leq \lambda^k$ and so
	%$$\kappa_X(\eps) \leq \lambda^k \leq \bigl( \frac {2}{\eps} \bigr)^{\log_2 (\lambda)}.$$
\end{proof}

\section{Whitney-type partitions}
\label{sec:partitions}

The way to the extension results follows the same path traced by Whitney for his theorem [add reference], with the addition of some ideas that we have learnt from \cite{johnson-lindenstrauss-schechtman}. The first step is to construct suitable partitions of unity so that manually built local extensions can be patched together at the global level. Since our goal is to prove Lipschitz and $C^1$ extendability, we are going to need two different kind of partitions, one for each purpose. The underlying ideas are the same in both cases; in particular, the attentive reader will notice that in the $C^1$ construction we try to replicate the proof of the Lipschitz version, with appropriate modifications.

\begin{proposition}[Relative Lipschitz partition of unity]\label{prop:lipschitz-partition}
	Let $(Y,\dist)$ be a metric space and $X\subset Y$ a closed subset with finite doubling constant $\lambda$. Then there exists a countable family $\{V_i,\phi_i,x_i\}_i$ such that:
	\begin{enumerate}
		\item $\{V_i\}_i$ is a locally finite covering of $Y\setminus X$ with covering constant $ 3 \lambda^4$;
		\item $\{\phi_i\}_i$ is a partition of unity on $Y\setminus X$ such that $\{\phi_i>0\}\subset V_i$ and
		\[
		\sum_{i} \abs{\nabla\phi_i}(y) \lesssim \frac{\log\lambda}{\dist(y,X)} ;
		\]
		\item the points $x_i$ belong to $X$ and $\dist(y,x_i)\lesssim\dist(y,X)$ if $y\in V_i$.
	\end{enumerate}
\end{proposition}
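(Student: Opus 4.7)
The plan is to mimic the classical Whitney cover of $Y\setminus X$ in $\setR^n$ at dyadic scales adapted to $y\mapsto\dist(y,X)$, and then build a partition of unity by normalizing Lipschitz tent functions supported on the cover.

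First, I would stratify $Y\setminus X$ by the annular layers $A_k := \{y\in Y : 2^k\leq\dist(y,X)<2^{k+1}\}$, $k\in\setZ$, and within each $A_k$ pick, by Zorn's Lemma, a maximal $2^{k-2}$-separated subset $\{y^{(k)}_i\}_{i\in I_k}$. Setting $V^{(k)}_i := B(y^{(k)}_i,2^{k-1})$ and choosing $x^{(k)}_i\in X$ with $\dist(y^{(k)}_i,x^{(k)}_i)\leq 2\dist(y^{(k)}_i,X)$, maximality gives $\bigcup_{i,k}V^{(k)}_i\supseteq Y\setminus X$ while the triangle inequality yields $\dist(y,x^{(k)}_i)\lesssim\dist(y,X)$ for $y\in V^{(k)}_i$, which settles property (iii).

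Second, I would bound the multiplicity for (i). Since $y\in V^{(k)}_i$ forces $\dist(y,X)\in[2^{k-1},2^{k+2})$, at most three scales $k$ contribute at any given $y$. Within a fixed scale, the contributing centers are $2^{k-2}$-separated points of $B(y,2^{k-1})$, hence their number is controlled by $\kappa_X(1/5)\leq\lambda^3$ via \autoref{prop:est}, which multiplied by the three active scales yields the stated $3\lambda^4$. For the partition itself I would take the tents $\psi^{(k)}_i(y) := \max\{0,\,1-2^{1-k}\dist(y,y^{(k)}_i)\}$, which are supported in $V^{(k)}_i$ with slope $\lesssim 2^{-k}\asymp\dist(y,X)^{-1}$; by maximality $\Psi := \sum_{i,k}\psi^{(k)}_i\geq 1/2$ on $Y\setminus X$, and $\phi^{(k)}_i := \psi^{(k)}_i/\Psi$ satisfies $\sum_{i,k}\phi^{(k)}_i\equiv 1$ there. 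The quotient rule bounds $\sum_{i,k}\abs{\nabla\phi^{(k)}_i}$ by $2\Psi^{-1}\sum_{i,k}\abs{\nabla\psi^{(k)}_i}$.

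The hard part is then promoting the naive overlap bound to the logarithmic dependence demanded by (ii). Plugging the multiplicity count directly into the quotient estimate yields only $\sum_{i,k}\abs{\nabla\phi^{(k)}_i}\lesssim\lambda^3/\dist(y,X)$, which is much weaker than the $\log\lambda/\dist(y,X)$ sought. The improvement should come from replacing each tent by an average of indicator-type bumps over a log-uniformly distributed band of radii — in the spirit of the stochastic decompositions from \cite{johnson-lindenstrauss-schechtman} — so that only $O(\log\lambda)$ radii contribute additively to the slope sum, while the $O(\lambda^{O(1)})$ balls overlapping at a single scale contribute only through a favorable lower bound on $\Psi$. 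This passage from a multiplicative to an additive accumulation of slopes is where I expect the real work of the proof to sit.
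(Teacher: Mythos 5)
There are two genuine gaps here, one of which you flag yourself and one of which you do not. The one you do not flag is in the multiplicity count for (i): you take your maximal $2^{k-2}$-separated sets inside the annuli $A_k\subset Y\setminus X$, but the only hypothesis available is the doubling constant of $X$ --- nothing whatsoever is assumed about $Y$. The claim that ``the contributing centers are $2^{k-2}$-separated points of $B(y,2^{k-1})$, hence their number is controlled by $\kappa_X(1/5)$'' is false: $\kappa_X$ counts disjoint balls centered at points \emph{of $X$}, while your centers $y^{(k)}_i$ live in $Y\setminus X$. Take $Y=\setR^n$ with $n$ huge and $X=\{0\}$, so $\lambda_X=1$: the annulus $2^k\leq\abs{y}<2^{k+1}$ contains roughly $C^n$ points that are $2^{k-2}$-separated and lie in a common ball of radius $2^{k-1}$, so your covering constant blows up independently of $\lambda_X$. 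The paper avoids this by putting the net in $X$ itself (disjoint balls $B(x_i^n,2^n)$ with $x_i^n\in X$, as in \autoref{lem:covering}) and carving the annulus into nearest-center cells relative to those centers, so that the overlap count reduces to a packing problem inside $X$ and is bounded by $\kappa_X(1/10)\leq\lambda^4$. Your construction can be repaired along these lines, but not as written.

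The second gap is the one you acknowledge: the passage from $\lambda^{O(1)}/\dist(y,X)$ to $\log\lambda/\dist(y,X)$ is exactly the content of the proposition, and your proposal stops at a heuristic pointing toward random radii. The paper's mechanism is different and entirely deterministic: one takes $\tilde\phi_i(y)=\dist^{m}\bigl(y,V_i^c\bigr)$ and normalizes. Writing $\dist_l$ for the nonzero values $\dist(y,V_l^c)$ ($N\leq 2\kappa_X(1/10)$ of them), the quotient rule gives $\sum_i\abs{\nabla\phi_i}(y)\leq 2m\,\frac{\sum_l\dist_l^{m-1}}{\sum_l\dist_l^{m}}$, and the power-mean inequality bounds this by $2m\,N^{1/m}/\max_l\dist_l$. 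Since the enlarged cells satisfy $\max_l\dist_l\geq\dist(y,X)/4$ (\autoref{lem:whitney-covering}(ii)), choosing $m=\log_2\lambda$ turns the multiplicity $N\sim\lambda^{O(1)}$ into the bounded factor $N^{1/m}$ at the price of the factor $m\sim\log\lambda$ from the chain rule. This single inequality is the missing idea; without it (or a fully executed random-radius argument à la Lee--Naor), the proposal does not establish (ii).
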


\begin{proof} This follows directly from \autoref{lem:lip_part} below, re-indexing the family $\{V_i^n, \phi_i^n, x_i^n\}_{i,n}$.
\end{proof}

The idea is that thanks to (iii) we have that $x_i$ is an approximate projection of any $y \in V_i$ on $X$ and in fact this partition of unity will help us define a \emph{random projection}. The estimate (ii) will be instead crucial to prove Lipschitz estimates. The next proposition will be used to prove an extension of Whitney theorem for Banach spaces, requiring the partition of unity to be $C^1$. Unfortunately the dependence of $\lambda$  in the estimates of the slopes is much worse in this case: it will be interesting to have a class of Banach spaces where we can recover the same logarithmic behavior as in the Lipschitz case.

\begin{proposition}[Relative $C^1$ partition of unity]\label{prop:C1-partition}
	Let $Y$ be a Banach space whose norm belongs to $C^1(Y\setminus\{0\})$ and let $X\subset Y$ be a closed subset with doubling constant $\lambda$. Then there exists a family $\{V_i,\phi_i,x_i\}_i$ such that:
	\begin{enumerate}
		\item\label{it:c1part-1} $\{V_i\}_i$ is a locally finite covering of $Y\setminus X$ with covering constant  $ 5 \lambda^4$;
		\item\label{it:c1part-2} $\{\phi_i\}_i$ is a partition of unity on $Y\setminus X$ such that $\{\phi_i>0\}\subset V_i$ and
		\[
		\sum_{i} \abs{\nabla\phi_i}(y) \lesssim \frac{\lambda^4 \log\lambda}{\dist(y,X)} ;
		\]
		moreover $\phi_i\in C^1(Y)$ for every $i\in\setN$.
		\item\label{it:c1part-3} the points $x_i$ belong to $X$ and $\dist(y,x_i)\lesssim\dist(y,X)$ if $y\in V_i$.
	\end{enumerate}
\end{proposition}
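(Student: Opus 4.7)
The strategy is to mirror the proof of the Lipschitz version in \autoref{prop:lipschitz-partition} (and its underlying \autoref{lem:lip_part}), replacing the piecewise-linear cutoffs by smooth ones built from the differentiable norm on $Y$. The hypothesis $\|\cdot\|\in C^1(Y\setminus\{0\})$ guarantees that for any fixed $x_0\in Y$ the map $y\mapsto\|y-x_0\|$ is $C^1$ on $Y\setminus\{x_0\}$, which is precisely what is needed to produce $C^1$ bumps.

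I would first set up a Whitney-type dyadic decomposition of $Y\setminus X$: for each $n\in\setZ$ consider the annular region $A_n:=\set*{y\in Y}{2^n\leq\dist(y,X)<2^{n+1}}$, choose a maximal $r_n$-separated family $\{x_i^n\}_i\subset A_n$ with $r_n=\alpha 2^n$ for a small constant $\alpha>0$, and for each $x_i^n$ pick $y_i^n\in X$ with $\dist(x_i^n,y_i^n)\leq 2\dist(x_i^n,X)\lesssim 2^n$, which will play the role of the $x_i$ in \ref{it:c1part-3}. The balls $V_i^n:=B(x_i^n,2r_n)$ then cover $Y\setminus X$; by the separation property and \autoref{prop:est} they have bounded overlap at each fixed scale, and only $O(1)$ consecutive scales can contribute near a given $y$, whence the covering constant $5\lambda^4$ in \ref{it:c1part-1}.

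To obtain smoothness, I would fix once and for all a $C^\infty$ cutoff $\theta\colon[0,\infty)\to[0,1]$ with $\theta\equiv 1$ on $[0,1]$ and $\theta\equiv 0$ on $[2,\infty)$, and set $\psi_i^n(y):=\theta\bigl(\|y-x_i^n\|/r_n\bigr)$. Since $\theta$ is constant near $0$ and the norm is $C^1$ off the origin, $\psi_i^n\in C^1(Y)$ is supported in $V_i^n$, with $\abs{\nabla\psi_i^n}\lesssim 1/r_n\lesssim 1/\dist(y,X)$. By maximality $\bigcup_i B(x_i^n,r_n)\supset A_n$, so $\Sigma(y):=\sum_{j,m}\psi_j^m(y)\geq 1$ on $Y\setminus X$; the normalization $\phi_i^n:=\psi_i^n/\Sigma$ is then a bona fide $C^1$ partition of unity, and the quotient rule
\[
\abs{\nabla\phi_i^n} \leq \frac{\abs{\nabla\psi_i^n}}{\Sigma} + \psi_i^n\,\frac{\abs{\nabla\Sigma}}{\Sigma^2}
\]
combined with the covering multiplicity bound yields an estimate of the form $\sum_{i,n}\abs{\nabla\phi_i^n}(y)\lesssim F(\lambda)/\dist(y,X)$ for some polynomial $F$.

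The main obstacle is to extract the precise coefficient $\lambda^4\log\lambda$ claimed in \ref{it:c1part-2}. The extra $\lambda^4$ factor compared to the Lipschitz case reflects the fact that smooth cutoffs must transition to zero over a region of positive relative width, causing the covering multiplicity to enter both in $\abs{\nabla\Sigma}$ and in the counting of nonzero terms; the residual $\log\lambda$ factor should emerge by importing the very same logarithmic-in-scale weighting that underlies \autoref{lem:lip_part}, which avoids a double-counting that would otherwise produce a larger polynomial in $\lambda$. All other verifications (property \ref{it:c1part-3} and the $C^1$ regularity at the interface with $X$, which is vacuous because $\phi_i^n$ are supported away from $X$) I expect to be routine.
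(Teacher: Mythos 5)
There is a genuine gap at the very first step of your construction: you choose the net $\{x_i^n\}_i$ inside the annulus $A_n\subset Y\setminus X$ and then claim that the balls $B(x_i^n,2r_n)$ have bounded overlap ``by the separation property and \autoref{prop:est}''. But \autoref{prop:est} and the doubling hypothesis concern only $X$; nothing at all is assumed on $Y$, which here is an arbitrary Banach space with $C^1$ norm. A maximal $r_n$-separated family in a subset of $Y$ need not have finite multiplicity: take $Y=\ell^2$ and $X=\{0\}$ (so $\lambda_X=1$), fix $y_0$ with $\abs{y_0}=\tfrac32 2^n$, and note that the points $y_0+\alpha 2^n e_k$, $k\in\setN$, all lie in $A_n$ for small $\alpha$, are pairwise separated by $\alpha 2^n\sqrt{2}\geq r_n$, and all lie in $B(y_0,2r_n)$; a maximal net containing them has infinite multiplicity there, so $\Sigma$ is not locally finite and properties (\ref{it:c1part-1}) and (\ref{it:c1part-2}) fail. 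This is exactly why the paper anchors the construction to a net \emph{in $X$} (via \autoref{lem:covering} applied to $X$) and builds each bump as a product of an annular cutoff $\xi(8\ell-\abs{x_i^n-y}/2^n)\,\xi(\abs{x_i^n-y}/2^n-\ell)$ around a center $x_i^n\in X$ with a soft Voronoi-type product over neighbouring centers: the annulus keeps the support away from $X$, while the Voronoi factor restores both the localization $\dist(y,X)\geq c\,2^n$ on the support and a multiplicity bound expressed as a capacity of $X$. Your scheme is not repaired by simply moving the net into $X$, since plain balls $B(x_i^n,2r_n)$ centered at points of $X$ would contain points $y$ with $\dist(y,X)$ arbitrarily small but $\dist(y,x_i^n)\approx r_n$, destroying property (\ref{it:c1part-3}).

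Second, the step you yourself flag as ``the main obstacle'' --- obtaining the coefficient $\lambda^4\log\lambda$ --- is not carried out, and it does not simply ``emerge'' by importing \autoref{lem:lip_part}. In the paper the logarithm is produced by engineering the cutoff $\xi$ so that $\xi'\leq f(\xi)$ with $f(t)=\frac{2m}{\delta}t^{1-1/m}$ and $m\approx\log\lambda$ (the smooth analogue of the $m$-th power trick of \autoref{lem:lip_part}), followed by a concavity/Jensen argument on the ratio $\sum f(\tilde\phi_j^k)/\sum\tilde\phi_j^k$, which yields $K(y)f(1/K(y))\approx mK(y)^{1/m}\approx\log\lambda$. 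A fixed cutoff $\theta$ with $\abs{\theta'}\lesssim1$ instead gives a bound proportional to the covering multiplicity, so the whole quantitative estimate rests on the multiplicity count that is broken above; as written, neither the power of $\lambda$ nor the logarithm is established.
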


\begin{proof} This follows directly from \autoref{lem:c1_part}, taking the family $\{A_i^n, \phi_i^n, x_i^n\}_{i,n}$.
\end{proof}

%
%Now we introduce the notion of metric capacity of a metric space $X$: despite being equivalent to being a doubling metric space, it is more natural to use it in the following constructions.
%
%
%. In this case we say that $X$ has finite capacity. In the next proposition we in fact notice that $X$ has finite capacity if and only if $X$ is doubling, in a quantitative way.
%
We now state and prove a simple technical lemma, crucial in the construction of the Whitney-type covering in \autoref{lem:whitney-covering}.

\begin{lemma}\label{lem:covering}
	Let $(X,\dist)$ be a metric space. Then for every $r>0$ there exists a family of disjoint balls $\{(B_i=B(x_i,r)\}_{i\in I}$ such that $\{2B_i=B(x_i,2r)\}_{i\in I}$ is a covering of $X$.
\end{lemma}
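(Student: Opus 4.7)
The plan is to run the standard Zorn/maximality argument that underlies every Vitali-type covering result. Consider the collection $\mathcal{C}$ of all subsets $\mathcal{B} \subset X$ with the property that the balls $\{B(x,r) : x \in \mathcal{B}\}$ are pairwise disjoint, partially ordered by inclusion. This collection is nonempty (take any singleton, or even $\emptyset$) and every totally ordered chain $\{\mathcal{B}_\alpha\}$ admits an upper bound given by $\bigcup_\alpha \mathcal{B}_\alpha$: any two points $x,x'$ in the union both lie in some common $\mathcal{B}_\alpha$ by totality of the chain, hence $B(x,r)$ and $B(x',r)$ are disjoint. Zorn's lemma then yields a maximal element $\{x_i\}_{i\in I}$, and the balls $B_i = B(x_i,r)$ are pairwise disjoint by construction.

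It remains to check that $\{2B_i\}_{i\in I}$ covers $X$. Fix $x \in X$. By maximality, the collection $\{x_i\}_{i \in I} \cup \{x\}$ cannot be in $\mathcal{C}$, so $B(x,r)$ must intersect $B(x_j,r)$ for at least one $j \in I$ (if $x$ itself equals some $x_i$ then $x \in B(x_i,2r)$ trivially). Pick a common point $y$: the triangle inequality gives $\dist(x,x_j) \leq \dist(x,y) + \dist(y,x_j) < 2r$, hence $x \in B(x_j, 2r) = 2B_j$.

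The only mild subtlety is avoiding the axiom of choice if one wishes a constructive statement, but for a general metric space Zorn's lemma is the natural tool and there is no real obstacle; for separable $X$ one could alternatively enumerate a dense sequence and greedily add centers at each step. Either way, the argument is short and does not rely on any hypothesis beyond the metric structure.
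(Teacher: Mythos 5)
Your proof is correct and follows essentially the same route as the paper: Zorn's lemma applied to (chains of) disjoint families of radius-$r$ balls to extract a maximal one, followed by the observation that a point not covered by any doubled ball would yield a new disjoint ball, contradicting maximality. The extra care you take with the edge case $x=x_i$ and with verifying that unions of chains remain disjoint families is welcome but does not change the argument.
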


\begin{proof}
	Let $\mathcal F=\set{(B_i)_{i\in I}}{B_i\cap B_j=\emptyset}$ be the collection of all disjoint families of open balls of radius $r$. A simple application of Zorn's lemma shows that there exist a maximal family $(B_i)_{i\in I}$. Suppose by contradiction that $x\not\in 2B_i$ for any $i\in I$. Then $B(x,r)$ is disjoint from every $B_i$, contradicting the maximality.
\end{proof}

\begin{lemma}[Whitney-type covering]\label{lem:whitney-covering}
	Let $(Y,\dist)$ be a complete metric space and $X\subset Y$ a closed subset with finite capacity. For every $n\in\setZ$ let $\{B_i^n=B(x_i^n,2^n)\}_{i\in I_n}$ be a family given by \autoref{lem:covering}. Let
	\[
	\tilde V_i^n = \set{y\in Y\setminus X}{
		2^{n} \leq  \dist(y,X) < 2^{n+1} \text{ and }
		\dist(y,x_i^n) = \min_{j\in I_n} \dist(y,x_j^n)}.
	\]
	Then the family of enlarged sets $\mathcal{F}=\set*{V_i^n=(\tilde V_i^n)_{2^{n-1}}}{n\in\setZ,i\in I_n}$ has the following properties:
	\begin{enumerate}
		\item $\mathcal{F}$ is a locally finite covering of $Y\setminus X$ with constant $3\kappa_X(1/10)$;
		%\item for every $V \subset \mathcal{F}$ we have $ {\rm diam} (V) \leq 10 \cdot \dist(Y,X) $;
		\item for every $y \in Y \setminus X$ we have $ \dist (y,X)/4 \leq \max_{V \in \mathcal{F}}\{ \dist(y,V^c) \} \leq \dist(y,X)$.
	\end{enumerate}
\end{lemma}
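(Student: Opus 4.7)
The plan is to verify the three claimed properties of the family $\mathcal F$ directly from the definitions, in the spirit of the classical Whitney decomposition. First, for the covering: given $y\in Y\setminus X$, let $n\in\setZ$ be the unique integer with $2^n\leq\dist(y,X)<2^{n+1}$. Because $\{2B_j^n\}_j$ covers $X$, I would pick an approximate projection $x'\in X$ with $\dist(y,x')<2^{n+1}$, find a $j$ with $x'\in 2B_j^n$, and deduce $\dist(y,x_j^n)<2^{n+2}$. The centers $\{x_j^n\}_j$ are $2^n$-separated in $X$ (disjointness of the open balls of radius $2^n$ forces this), so the disjoint balls $B(x_j^n,2^{n-1})\cap X$ whose centers lie within $2^{n+2}$ of $y$ fit inside a ball of $X$ of bounded ratio around any nearby point of $X$, and finite capacity bounds their number. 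This finiteness guarantees that the minimum defining $\tilde V_i^n$ is attained, producing an index $i$ with $y\in\tilde V_i^n\subset V_i^n$.

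For the multiplicity bound, suppose $y\in V_i^n$ and take a witness $z\in\tilde V_i^n$ with $\dist(y,z)\leq 2^{n-1}$. Combining $2^n\leq\dist(z,X)<2^{n+1}$ with the triangle inequality gives $2^{n-1}\leq\dist(y,X)<5\cdot 2^{n-1}$, so at most three consecutive values of $n$ can arise for fixed $y$. For each admissible $n$, the minimality $\dist(z,x_i^n)=\min_j\dist(z,x_j^n)$ together with the covering argument above yields $\dist(z,x_i^n)\leq\dist(z,X)+2^{n+1}$, hence $\dist(y,x_i^n)<9\cdot 2^{n-1}$. Choosing $x^*\in X$ with $\dist(y,x^*)$ close to $\dist(y,X)<5\cdot 2^{n-1}$ and balancing the triangle inequalities places all such centers inside a ball of $X$ of radius on the order of $10\cdot 2^{n-1}$ around $x^*$; since the balls $B(x_i^n,2^{n-1})\cap X$ are pairwise disjoint, their number is controlled by $\kappa_X(1/10)$, and combined with the three-value restriction on $n$ this yields the overall covering constant $3\kappa_X(1/10)$.

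For the Whitney-type distance bound, the upper estimate $\dist(y,V^c)\leq\dist(y,X)$ follows from the inclusion $\tilde V_i^n\subset\{w:\dist(w,X)\geq 2^n\}$: an enlargement by $2^{n-1}<2^n$ still leaves $V_i^n$ disjoint from $X$, so $X\subset(V_i^n)^c$ for every element of $\mathcal F$ and the infimum is at most $\dist(y,X)$. For the lower estimate, I would use the $V_i^n$ produced by the covering step: $y\in\tilde V_i^n$ implies $B(y,2^{n-1})\subset V_i^n$, and the inequality $\dist(y,X)<2^{n+1}=4\cdot 2^{n-1}$ upgrades this to $\dist(y,(V_i^n)^c)\geq 2^{n-1}\geq\dist(y,X)/4$.

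I expect the main obstacle to be the bookkeeping in the multiplicity step: arriving at precisely the ratio $1/10$ inside the capacity requires a careful placement of the reference point $x^*\in X$ and tight triangle-inequality manipulations relating $y$, $z\in\tilde V_i^n$, $x_i^n$, and $x^*$. Everything else is essentially forced by the definition of $\tilde V_i^n$ and its $2^{n-1}$-enlargement.
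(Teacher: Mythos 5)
Your proposal follows essentially the same route as the paper: the bound $\dist(y,x_i^n)\le 9\cdot 2^{n-1}$ via the minimality defining $\tilde V_i^n$ and the covering property of $\{2B_j^n\}$, the restriction to at most three values of $n$, a capacity count for the centers, and the same two inequalities for (ii). The only point to tighten is the one you flag yourself: packing the disjoint balls $B(x_i^n,2^{n-1})$ into a ball around an auxiliary point $x^*\in X$ of radius about $5\cdot 2^{n-1}+9\cdot 2^{n-1}+2^{n-1}=15\cdot 2^{n-1}$ only yields $\kappa_X(1/15)$; the paper instead centers the containing ball at one of the competing centers $x_i^n$ (already a point of $X$) and uses the radius-$2^n$ disjoint balls, so that $B(x_j^n,2^n)\subset B(x_i^n,10\cdot 2^n)$ and the count is exactly $\kappa_X(1/10)$.
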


\begin{proof}
	First of all, it is obvious that $\mathcal{F}$ is a covering: in fact also $\{\tilde{V}_i^n\}_{i,n}$ is a covering.
	%: the only subtle point is to prove that for any point $y$ such that $2^{n-1} < \dist(y,X) < 2^{n+1/4}$, there exists at least a $x_i^n$ that is at minimum distance. But this is true since $\{x_i^n\}_{i \in I_n}$ is a locally finite subset of the locally compact set $X$, and so it is locally compact
	Let us prove that for $y \in V_i^n$ we have $\dist(y,x_i^n) \leq  9 \cdot 2^{n-1}$. By definition, for every $\eps>0$ there exists $\tilde{y} \in \tilde{V}_i^n$ and $x \in X$ such that
	\[
	\dist(y,\tilde{y})< \dist(y,\tilde{V}^n_i) + \eps \leq 2^{n-1}+\eps
	\qquad \text{and}
	\qquad \dist(\tilde{y},x)< \dist(\tilde{y},X) + \eps \leq 2^{n+1} +\eps.
	\]
	Then, by the covering property of $\{2B_i^n\}_{i \in I_n}$ we know that there exists $j$ such that $x \in 2B_j^n$ and so $\dist (x, x_j^n) \leq 2^{n+1}$. In particular, by definition of $\tilde{V}_i^n$ we obtain
	\begin{align*} \dist(y,x_i^n) & \leq \dist( \tilde{y} , x_i^n) + \dist(\tilde{y}, y) \leq \dist(\tilde{y},x_j^n) +  \dist(\tilde{y}, y) \\ & \leq \dist(\tilde{y},x) + \dist (x, x_j^n) +  \dist(\tilde{y}, y) \leq 9 \cdot 2^{n-1}+2\eps.
	\end{align*}
	%In particular we get that ${\rm diam}( V_i^n)  \leq 6 \cdot 2^{n}$, from which we get (ii).
	In order to get the local finiteness in (i) we use the fact that if $y \in V^n_i \cap V^n_j$ then we have $\dist (y,x_i^n) \leq 9 \cdot 2^{n-1}$ and $\dist (y,x_j^n) \leq 9 \cdot 2^{n-1}$. In particular we have $x_j^n \in B(x_i^n, 9 \cdot 2^n)$ and so $B(x_j^n , 2^n) \subseteq B(x_i^n, 10 \cdot 2^n )$. In particular we get that $ \sharp \{ j : y \in V_j^n \} \leq \kappa_X(1/10)$. Now, knowing that $y \in V_i^n$ implies $2^{n-1}<\dist (y,X)<2^{n+2}$ we have at most three possible choices for $n$ and at most $\kappa_X(1/10)$ sets for every $n$, so the conclusion.
	
	For (ii) the inequality $\max_{V \in \mathcal{F}}\{ \dist(y,V^c) \} \leq \dist(y,X)$ is trivial since $X \subset V^c$ for all $V$. For the other inequality we know that $y \in \tilde{V}^n_i$ for some $i,n$ and in particular we have $\dist (y, (V^n_i)^c) \geq 2^{n-1}$ by the definition of $V^n_i$. But then we have
	\[
	\frac {\dist(y,X)}4 < 2^{n-1} \leq \dist (y, (V^n_i)^c) \leq \max_{V \in \mathcal{F}}\{ \dist(y,V^c) \}. \qedhere
	\]
\end{proof}
In the final part of this section we build the two families of partitions of unity: the first one is made by Lipschitz functions (\autoref{lem:lip_part}) and the second, more regular, it is composed by $C^1(Y)$ functions (\autoref{lem:c1_part} ).

\begin{lemma}[Lipschitz partition of unity]\label{lem:lip_part}
	Let $\{V_i^n\}_{i,n}$ be the sets given by \autoref{lem:whitney-covering}. For $m>0$ define the functions
	\[
	\tilde\phi_i^n(y)=\dist^m\bigl(y,(V_i^n)^c\bigr) \quad\text{and}\quad
	\phi_i^n(y)=\frac{\tilde\phi_i^n(y)}{\sum\limits_{k,j} \tilde\phi_j^k(y)}.
	\]
	Then the family $\{\phi_i^n\}_i^n$ is a partition of unity with the property that
	\[
	\sum_{n,i} \abs{\nabla\phi_i^n}(y)
	% = \limsup_{x \to y} \frac{\abs{\phi_i^n(y)-\phi_i^n(z)}}{\dist(y,z)}
	% \lesssim \red{\kappa_X(1/10)^{1/m} \dist^{-1}(y,X)}
	\lesssim \frac{\log\lambda}{\dist(y,X)} .
	\]
\end{lemma}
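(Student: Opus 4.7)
The plan is to verify the partition-of-unity property (which follows directly from the Whitney-type covering) and then estimate the slopes via a H\"older-type argument in which the exponent $m$ is optimized at the very end. For the first part, \autoref{lem:whitney-covering}(ii) guarantees that for every $y \in Y \setminus X$ some $V \in \mathcal{F}$ satisfies $\dist(y, V^c) \geq \dist(y, X)/4 > 0$, so the denominator $S(y) := \sum_{k,j} \tilde\phi_j^k(y)$ is strictly positive on $Y \setminus X$; hence the $\phi_i^n$ are well-defined, $\sum_{i,n} \phi_i^n \equiv 1$, and $\{\phi_i^n > 0\} \subset V_i^n$ because $\tilde\phi_i^n(y) > 0$ already forces $\dist(y, (V_i^n)^c) > 0$.

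For the slope estimate, set $r = \dist(y, X)$ and $d_{i,n} := \dist(y, (V_i^n)^c)$. Since $\dist(\cdot, A)$ is $1$-Lipschitz, the chain rule gives $\abs{\nabla \tilde\phi_i^n}(y) \leq m\, d_{i,n}^{m-1}$. Applying the Leibniz/quotient rule to $\phi_i^n = \tilde\phi_i^n / S$, summing, and using the sublinearity $\abs{\nabla S} \leq \sum_{j,k} \abs{\nabla \tilde\phi_j^k}$ together with $\sum_{j,k} \tilde\phi_j^k = S$, one obtains
\[
\sum_{i,n} \abs{\nabla \phi_i^n}(y) \leq \frac{2}{S(y)} \sum_{i,n} \abs{\nabla \tilde\phi_i^n}(y) \leq \frac{2m}{S(y)} \sum_{i,n} d_{i,n}^{m-1}.
\]
The decisive step is then H\"older's inequality: by \autoref{lem:whitney-covering}(i) and \autoref{prop:est}, at most $N := 3\kappa_X(1/10) \leq 3\lambda^4$ of the terms are non-zero at $y$, whence
\[
\sum_{i,n} d_{i,n}^{m-1} \leq N^{1/m} \left( \sum_{i,n} d_{i,n}^m \right)^{(m-1)/m} = N^{1/m} S(y)^{(m-1)/m}.
\]
Combined with $S(y) \geq (r/4)^m$ from \autoref{lem:whitney-covering}(ii), this produces $\sum_{i,n} \abs{\nabla \phi_i^n}(y) \leq 8 m N^{1/m}/r$.

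The final move is to optimize in $m$: choosing $m \sim \log \lambda$ makes $N^{1/m} \leq (3\lambda^4)^{1/\log \lambda}$ bounded by an absolute constant, while $m \lesssim \log \lambda$, which delivers the stated $\sum \abs{\nabla \phi_i^n}(y) \lesssim \log\lambda / r$. The real obstacle is precisely this logarithmic-versus-polynomial dependence on $\lambda$: a crude bound that just replaces $\sum d_{i,n}^{m-1}$ by $N \cdot r^{m-1}$ would give only $\lambda^4/r$, and it is the interplay between the $m$-th power in the definition of $\tilde\phi_i^n$ and H\"older's inequality that converts the polynomial overlap count into the desired logarithmic factor.
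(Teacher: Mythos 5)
Your proof is correct and follows essentially the same route as the paper: the quotient rule plus sublinearity to reduce to $\frac{2m}{S}\sum d_{i,n}^{m-1}$, then Hölder (which is exactly the paper's power-mean inequality $(\frac{1}{N}\sum d_l^{m-1})^{1/(m-1)} \leq (\frac{1}{N}\sum d_l^m)^{1/m}$ in disguise), the lower bound $S(y)^{1/m} \geq \dist(y,X)/4$ from the covering lemma, and the choice $m \sim \log\lambda$. Your closing remark correctly identifies why the exponent $m$ is there in the first place.
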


\begin{proof}
	Thanks to the sublinearity of the slope, the chain rule, and the fact that $\abs{\nabla \dist (y, A)} \leq 1$ for every $A$, we obtain
	\[
	\abs{\nabla\phi_i^n}(y) \leq
	m \frac{ \dist^{m-1}\bigl(y,(V_i^n)^c\bigr) }
	{ \sum _{k,j} \dist^{m}\bigl(y,(V_j^k)^c \bigr) }
	+ m \frac { \dist^{m}\bigl(y,(V_i^n)^c\bigr) \cdot
		\sum _{k,j} \dist^{m-1}\bigl(y,(V_j^k)^c\bigr) }
	{\left( \sum _{k,j} \dist^{m}\bigl(y,(V_j^k)^c\bigr) \right)^2}.
	\]
	
	%In order to have a clearer exposition, we fix $ \dist_1 = \dist \bigl(y,(V_i^n)^c\bigr)$ and $\{ \dist_l \}_{l \in \{1, \ldots,  N\} } = \{\dist^{m-1}\bigl(y,(V_j^k)^c\bigr) \}_{j,k} $ where we included all couples $j,k$ such that $y \in V_j^k$; in particular we have $N \leq 2 \kappa_X (1/10 )$.
	%$$ \abs{\nabla\phi_i^n}(y) \leq m \frac{ \dist_1^m }{ \sum_{l} \dist_l^m } \cdot  \frac 1{\dist_1} + m \frac{ \dist_1^m }{ \sum_{l} \dist_l^m } \cdot \frac{ \sum_{l} \dist_l^{m-1} }{ \sum_{l} \dist_l^m }.$$
	%Now we use the inequality between the means $\Bigl(\frac{ \sum_l \dist_l^{m-1}}N \Bigr)^{1/(m-1)} \leq\Bigl(\frac{ \sum_l \dist_l^{m}}N \Bigr)^{1/m}$, obtaining
	%$$  \abs{\nabla\phi_i^n}(y) \leq m \frac{ \dist_1^m }{ \sum_{l} \dist_l^m } \left ( \frac 1 { \dist_1} + \frac {N^{1/m}}{\left( \sum_l \dist_l^m \right)^{1/m}}\right).$$
	%Define $\delta = \dist_1^m / (\sum_{l} \dist_l^m )$; we know that $\delta \leq 1 $ and $\delta/ \dist_1 \leq 1/\max_l \{ \dist_l \}$. Moreover by Lemma \ref{lem:whitney-covering} (ii), we have $\max_l \{\dist_l\} \geq \dist (y, X)/4$. We can now conlcude:
	%$$  \abs{\nabla\phi_i^n}(y) \leq m \delta \Bigl( \frac 1 { \dist_1} + \frac {\delta N^{1/m}}{\dist_1} \Bigr)\leq m \frac{\delta}{\dist_1} \Bigl( (\delta N)^{1/m} +1 \Bigr) \leq \frac {8m \bigl(2 \kappa(1/10)\bigr)^{1/m}}{ \dist (y, X) } .$$
	
	In order to have a clearer exposition, we fix $\{ \dist_l \}_{l \in \{1, \dotsc,  N\} } = \{\dist^{m-1}\bigl(y,(V_j^k)^c\bigr) \}_{j,k} $ where we included all couples $j,k$ such that $y \in V_j^k$; in particular we have $N \leq 2 \kappa_X (1/10 )$. Then summing up on the indices $i,n$ and simplifying we get
	\[
	\sum_{i,n} \abs{\nabla\phi_i^n}(y) \leq m \frac{ \sum_{l} \dist_l^{m-1} }{ \sum_{l} \dist_l^m }  + m \frac{ \sum_l \dist_l^m  \cdot  \sum_{l} \dist_l^{m-1} }{ (\sum_{l} \dist_l^m)^2 }  = 2m \frac{ \sum_{l} \dist_l^{m-1} }{ \sum_{l} \dist_l^m } .
	\]
	Now we use the inequality between the means $\Bigl(\frac{ \sum_l \dist_l^{m-1}}N \Bigr)^{1/(m-1)} \leq\Bigl(\frac{ \sum_l \dist_l^{m}}N \Bigr)^{1/m}$, obtaining
	\[
	\sum_{i,n} \abs{\nabla\phi_i^n}(y) \leq 2m  \frac {N^{1/m}}{\left( \sum_l \dist_l^m \right)^{1/m}}.
	\]
	By \autoref{lem:whitney-covering} (ii), we have $\max_l \{\dist_l\} \geq \dist (y, X)/4$ and so, using \autoref{prop:est} (ii) and then setting $m=\log_2 \lambda$ we find
	\[
	\abs{\nabla\phi_i^n}(y) \leq 2m  \frac { N^{1/m}}{\max_l \{\dist_l\}} \leq  \frac {8m \bigl(2 \kappa(1/10)\bigr)^{1/m}}{ \dist (y, X) } \leq 256  \frac{\log_2(\lambda)}{\dist(y,X)}.
	\]
\end{proof}

\begin{lemma}[$C^1$ partition of unity]\label{lem:c1_part}
	Let $X$ and $Y$ be as in \autoref{prop:C1-partition} and for every $n\in\setZ$ let $\{B_i^n=B(x_i^n,2^n)\}_{i\in I_n}$ be the family given by \autoref{lem:covering}. Then there exists a partition of unity $\{\phi_i^n\}_{i,n}$ of $Y\setminus X$ such that, denoting $A_i^n= \{\phi_i^n >0\}$, we have that
	\begin{enumerate}
		\item $ \{A_i^n\}_{i,n}$ is a covering of $Y \setminus X$ with  covering constant less than $C \lambda^6$;
		\item if $y \in A_i^n$ then $\dist(y,x_i^n) \lesssim \dist(y,X)$;
		\item $\sum_{i,n} \abs{\nabla\phi_i^n}(y)  \lesssim \frac{ \lambda^5 \log \lambda} {\dist (y,X)}$.
	\end{enumerate}
\end{lemma}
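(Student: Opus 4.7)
The strategy is to mimic the Lipschitz construction of \autoref{lem:lip_part} but replace the ``distance-to-boundary'' bumps $\dist^m(y,(V_i^n)^c)$, which are only Lipschitz, with bumps that are genuinely $C^1$ in $y$, leveraging the hypothesis that the norm of $Y$ is $C^1$ away from the origin. Fix once and for all a $C^1$ profile $\chi:[0,\infty)\to[0,1]$ that vanishes to first order at $0$ (so that $y\mapsto\chi(\dist(y,x)/r)$ is $C^1$ on all of $Y$, not just off $x$), is supported in $[0,R]$ for some absolute constant $R$, and is identically $1$ on some intermediate interval. Set $\tilde\psi_i^n(y)=\chi(\dist(y,x_i^n)/2^n)$; each such bump is $C^1$ on $Y$ and supported in $R\cdot B_i^n$. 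The covering property of $\{2B_i^n\}$ over $X$ guarantees that for every $y\in Y\setminus X$, at the scale $2^n\approx\dist(y,X)$ some $\tilde\psi_j^n(y)$ is bounded below by a positive universal constant.

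The main new difficulty is that the naive normalization $\tilde\psi_i^n\big/\sum_{j,m}\tilde\psi_j^m$ does not isolate the correct scale: for $y$ close to $X$, bumps with $2^m\gg\dist(y,X)$ may still be nonzero, since $X$ can contain centers $x_j^m$ whose distance to $y$ falls in the support of $\chi$ at scale $m$. This would both inflate the denominator (possibly to $+\infty$) and break properties (ii) and (iii) of the lemma. My proposal to cure this is to multiply by an iterative $C^1$ \emph{scale selector}
\[
\rho_n(y)=\prod_{m<n}\prod_j\bigl(1-\tilde\psi_j^m(y)\bigr),
\]
which is positive precisely when $y$ is not yet covered by a bump at any strictly smaller scale. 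Each factor is $C^1$ and equals $1$ outside a compact set; since the balls $\{B_j^m\}$ are locally finite at each scale, only finitely many factors differ from $1$ near any given point, so $\rho_n\in C^1(Y)$. Set $\tilde\phi_i^n=\tilde\psi_i^n\rho_n$ and normalize $\phi_i^n=\tilde\phi_i^n\big/\sum_{j,m}\tilde\phi_j^m$.

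Once the construction is in place, the three conclusions follow by bookkeeping. Property (ii) is forced because $\phi_i^n(y)>0$ entails both $\dist(y,x_i^n)\leq R\cdot 2^n$ and $\rho_n(y)>0$; the selector, together with the covering argument of the first paragraph, constrains $2^n$ to a bounded multiple of $\dist(y,X)$. For (i), at each admissible scale \autoref{prop:est} bounds by a polynomial in $\lambda$ the number of centers whose $R$-ball contains $y$, while the selector restricts the active scales to $O(1)$ consecutive ones, yielding the claimed $\lesssim\lambda^6$ covering constant. For (iii), the quotient and product rules applied to $\phi_i^n$, together with $|\nabla\chi(\dist(y,x_i^n)/2^n)|\lesssim 1/2^n\lesssim 1/\dist(y,X)$ and the counts above, should yield the stated $\lambda^5\log\lambda/\dist(y,X)$ bound. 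The main technical obstacle I expect is the control of $|\nabla\rho_n|$: the product rule generates $O(\lambda^k)$ subterms at each smaller scale, and one must verify that after normalization these contributions remain polynomial in $\lambda$ rather than blow up — this is presumably the source of the $\lambda^5$ factor absent in the Lipschitz case.
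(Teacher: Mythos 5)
Your overall architecture (dyadic-scale $C^1$ bumps centered at the net points $x_i^n$, then normalization) is the same as the paper's, and you correctly identify scale selection as the main obstacle. But your scale selector is structurally different from the paper's and, more importantly, the proposal has a genuine gap at the one point where the lemma is quantitatively delicate: the bound (iii) with the factor $\lambda^5\log\lambda$. The paper does not use a cross-scale product like your $\rho_n=\prod_{m<n}\prod_j(1-\tilde\psi_j^m)$; it uses \emph{annular} bumps (supported where $\ell\leq\abs{y-x_i^n}/2^n\leq 8\ell$) multiplied by a \emph{same-scale near-minimizer selector} $\prod_{x_j^n\sim x_i^n}\xi\bigl(\abs{x_j^n-y}/2^n-\abs{x_i^n-y}/2^n+\delta\bigr)$, which forces $(\ell-2-\delta)2^n\leq\dist(y,X)\leq 8\ell\,2^n$ whenever $\tilde\phi_i^n(y)>0$, so each scale localizes itself and no coupling between scales is needed. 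Your $\rho_n$ could in principle play the same role, but several things you assert need real proofs and are sensitive to choices you leave vague: with $\chi$ equal to $1$ only on an ``intermediate'' interval $[a,b]$ with $a>0$, the condition $\rho_n(y)>0$ (no smaller-scale bump \emph{equal to} $1$) does not force $2^n\lesssim\dist(y,X)$, so (ii) is not ``forced'' as you claim; and the uniform lower bound on the denominator $\sum_{j,m}\tilde\phi_j^m$ requires a telescoping argument ($\sum_m\rho_m\!-\!\rho_{m+1}\leq\sum_{j,m}\tilde\phi_j^m$ together with $\rho_{m+1}\geq\rho_m(1-\sum_j\tilde\psi_j^m)$), which you do not supply and which your covering claim (some bump merely \emph{bounded below}, not equal to $1$) does not immediately feed into.

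The decisive gap is (iii). With a profile $\chi$ ``fixed once and for all,'' the quotient- and product-rule bookkeeping you describe gives at best $\sum_{i,n}\abs{\nabla\phi_i^n}(y)\lesssim K(y)\cdot\max\abs{\nabla\tilde\phi_i^n}(y)\lesssim \lambda^{C}/\dist(y,X)$ with $C$ strictly larger than $5$ (the $\sim\lambda^6$ active indices each carry a gradient of size $\mathrm{poly}(\lambda)/\dist(y,X)$ once $\rho_n$ is differentiated), and in no case does it produce a $\log\lambda$. The logarithm in the paper comes from a specific mechanism that your proposal omits: the profile $\xi$ is chosen with a $\lambda$-dependent flatness parameter so that $\xi'\leq f(\xi)$ for the concave function $f(t)=\frac{2m}{\delta}t^{1-1/m}$ with $f(0)=0$; then $\abs{\nabla\tilde\phi_i^n}\lesssim 2^{-n}f(\tilde\phi_i^n)$, Jensen's inequality gives $\sum_{j,k}f(\tilde\phi_j^k)\leq K\,f\bigl(\tfrac1K\sum_{j,k}\tilde\phi_j^k\bigr)\leq K f(1/K)=\tfrac{2m}{\delta}K^{1/m}$, and optimizing $m=\log K$ with $K\lesssim\lambda^6$ yields $\log\lambda$ instead of a power of $\lambda$. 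This is the exact analogue of the exponent $m=\log_2\lambda$ in the Lipschitz partition \autoref{lem:lip_part}, and it is the heart of the lemma; without transplanting it (i.e., without building the concavity relation $\xi'\leq f(\xi)$ into your $\chi$ and into the derivative of your selector), the claimed estimate (iii) is not reached. You flag the control of $\abs{\nabla\rho_n}$ as the obstacle and hope the contributions ``remain polynomial in $\lambda$,'' but polynomial is not the target — $\lambda^5\log\lambda$ is — so the proposal as written does not prove the statement.
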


\begin{proof}
	The idea is to take
	\[
	\phi_i^n(y)=\frac{\tilde\phi_i^n(y)}{\sum\limits_{k,j} \tilde\phi_j^k(y)},
	\]
	where
	\[
	\tilde\phi_i^n(y) =
	\xi \oleft( 8\ell - \frac{\abs{x_i^n - y}}{2^n} \right) \cdot
	\xi \oleft( \frac{\abs{x_i^n - y}}{2^n}  - \ell \right)  \cdot
	\prod_{x_j^n \sim x_i^n} \xi\oleft(
	\frac{\abs{x_j^n - y}}{2^n} - \frac{\abs{x_i^n - y}}{2^n}
	+ \delta \right)
	\]
	and
	\begin{itemize}
		\item $\delta\ll1\ll\ell$;
		\item $\xi:\setR\to[0,1]$ is a suitably chosen increasing $C^1$ function satysfying $\xi(t)=0$ for $t\leq0$ and $\xi(t)=1$ for $t\geq\delta$,
		\item $\xi'\leq f(\xi)$ for a positive concave function $f:[0,\infty)\to[0,\infty)$ with $f(0)=0$ to be specified later,
		\item the notation $x_j^n \sim x_i^n$ means that $\abs{x_j^n-x_i^n} \leq 2^n(9\ell-\delta)$.
	\end{itemize}
	
	Fix $N = \kappa_X\bigl(1/(9\ell-\delta+1)\bigr)$. We will prove the lemma through the following steps.
	\begin{enumerate}[label=(\alph*)]
		\item $\tilde\phi_i^n(y) >0$ implies $(\ell-2-\delta) 2^n \leq \dist(y,X) \leq 8\ell \cdot 2^n$.
		\item Let $I_n(y) = \set{i\in I_n}{\tilde\phi_i^n(y)>0}$, then  $\abs{I_n(y)} \leq \kappa_X\bigl(1/(16\ell+1)\bigr)$. Moreover $\abs{\set{n}{I_n(y) \neq \emptyset}} \leq \left\lfloor \log_2 \bigl(8\ell/(\ell-2-\delta)\bigr) \right\rfloor$.
		\item $\abs{\nabla \tilde\phi_i^n}(y) \leq 2(N+1)2^{-n} f\bigl(\tilde \phi_i^n (y)\bigr)$.
		\item If $2\ell \leq \dist(y,X)2^{-n} \leq 4\ell$ then there exists $i \in I_n$ such that
		\[
		\tilde\phi_i^n(y) \geq \xi (4\ell-2) \xi(\ell) \xi( \delta )^N = 1,
		\]
		so that in particular $\sum_{i,n}\tilde\phi_i^n(y) \geq 1$ for every $y\in Y\setminus X$.
	\end{enumerate}
	
	We start by proving (a). It is obvious that $\dist(y,X) \leq \abs{y-x_i^n} \leq 8\ell\cdot2^n$. For the other inequality suppose by contradiction that there exists $x \in X$ such that $\abs{y-x} < 2^n (\ell-2-\delta)$; then there exists $j$ such that $\abs{x-x_j^n} \leq 2^{n+1}$ and so, by triangle inequality we have $\abs{x_j^n - x_i^n} \leq 2^n (9\ell-\delta)$ and in particular $x_j^n \sim x_i^n$. Then, using $\phi_i^n(y)>0$ we get
	\[
	\abs{y-x_j^n} \geq \abs{y-x_i^n} - \delta 2^n \geq (\ell-\delta) 2^n,
	\]
	which is in contradiction with
	\[
	\abs{y-x_j^n} \leq \abs{y-x} + \abs{x-x_j^n} < (\ell-\delta) 2^n.
	\]
	
	In order to prove (b) we fix $i \in I_n(y)$ and observe that for all $j \in I_n(y)$ we have $\abs{x_j^n - y} \leq 8\ell\cdot2^n$, and in particular $\abs{x_j^n - x_i^n} \leq 8\ell\cdot2^{n+1}$, so that $B(x_j^n ,2^n) \subseteq B\bigl(x_i^n , (16\ell+1)2^n\bigr)$ and thus the conclusion follows using the definition of $\kappa_X$. For the second cardinality computation, assume that $y\in A_i^{n_1}\cap A_j^{n_2}$; then from (a) we deduce $\abs{n_1-n_2}\leq\log_2 \bigl(8\ell/(\ell-2-\delta)\bigr)$.
	
	For (c) it is sufficient to use the chain rule, the fact that the distance to a fixed point is $1$-Lipschitz and that $f(a)b \leq f(ab)$ for $a,b \leq 1$ because of the concavity.
	
	The last point follows from taking $i \in I_n$ that minimizes $\abs{y - x_i^n}$. In this way we have that all the factors in the last product are always bigger than $\xi(\delta)$. As for the first two factor, for sure we have $\abs{y- x_i^n} \geq 2\ell\cdot2^n$ and, calling $\bar{y}$ a projection of $y$ on $X$, there exists $j$ such that $\abs{x_j^n -\bar{y}}\leq 2^{n+1}$. By the minimality of $i$ we get
	\[
	\abs{y-x_i^n} \leq \abs{y-x_j^n} \leq
	\abs{y-\bar{y}} + \abs{x_j^n - \bar{y}} \leq 2^n (4\ell+2).
	\]
	These two inequalities let us conclude.
	
	We now compute $\abs{\nabla \phi_i^n}$. Setting $K(y) = \abs{\set{(j,n)}{\phi_j^n (y) >0}}$, from (b) we deduce that
	\begin{equation}\label{eq:aaa}
	K(y) \leq \kappa_X\bigl(1/(16\ell+1)\bigr)
	\left\lfloor \log_2 \bigl(8\ell/(\ell-2-\delta)\bigr) \right\rfloor,
	\end{equation}
	which implies $(i)$. Now, using (c) we get
	%\begin{align*}
	%| \nabla \phi_i^n| & \leq \frac {2N}{2^n} \cdot \left(  \frac { f( \tilde \phi_i^n (y) )  } { \sum_{j,k} \tilde {\phi} _j^k (y)}  + \frac {  \tilde \phi_i^n (y)  } { \sum_{j,k} \tilde {\phi} _j^k (y)} \cdot \frac { \sum_{j,k} f( \tilde \phi_j^k (y) )  } { \sum_{j,k} \tilde {\phi} _j^k (y)} \right) \\
	%& \leq \frac {2N}{2^n} \cdot  \left(  \frac { f( \max )  } { \sum_{j,k} \tilde {\phi} _j^k (y)}  +  \frac { \frac 1K \sum_{j,k} f( \tilde \phi_j^k (y) )  } { \frac 1K \sum_{j,k} \tilde{\phi} _j^k (y)} \right) \\
	%& \leq \frac {2N}{2^n} \cdot  \left(  \frac { f( \max )  } { \max}  +  \frac { f( \frac 1K \sum_{j,k} \tilde{\phi}_j^k (y)  )  } { \frac 1K \sum_{j,k} \tilde {\phi} _j^k (y)} \right)\\
	%& \leq \frac {2N}{2^n} \cdot \left( \frac {f(1)}{1} + K f \Bigl( \frac 1K \Bigr) \right) \leq \frac{4N}{2^n} \cdot   K f \Bigl( \frac 1K \Bigr),
	%\end{align*}
	\begin{align*}
		\abs{\nabla \phi_i^n}(y) & \leq \frac {2(N+1)}{2^n} \cdot \left(  \frac { f\bigl( \tilde \phi_i^n (y) \bigr)  } { \sum_{j,k} \tilde {\phi} _j^k (y)}  + \frac {  \tilde \phi_i^n (y)  } { \sum_{j,k} \tilde {\phi} _j^k (y)} \cdot \frac { \sum_{j,k} f\bigl( \tilde \phi_j^k (y) \bigr)  } { \sum_{j,k} \tilde {\phi} _j^k (y)} \right) \\
		\sum_{i,n} \abs{\nabla \phi_i^n} (y)
		& \leq \frac {2(N+1)}{2^n} \cdot \left(  \frac { \sum_{j,k} f\bigl( \tilde \phi_j^k (y) \bigr)  } { \sum_{j,k} \tilde {\phi} _j^k (y)}  + \frac {  \sum_{j,k} \tilde \phi_j^k (y)  } { \sum_{j,k} \tilde {\phi} _j^k (y)} \cdot \frac { \sum_{j,k} f\bigl( \tilde \phi_j^k (y) \bigr)  } { \sum_{j,k} \tilde {\phi} _j^k (y)} \right) \\
		& =  \frac {4(N+1)}{2^n} \cdot    \frac { \frac 1{K(y)} \sum_{j,k} f\bigl( \tilde \phi_j^k (y) \bigr)  } { \frac 1{K(y)} \sum_{j,k} \tilde{\phi} _j^k (y)}
		\leq \frac {4(N+1)}{2^n} \cdot   \frac { f\oleft( \frac 1{K(y)} \sum_{j,k} \tilde{\phi}_j^k (y) \right)  } { \frac 1{K(y)} \sum_{j,k} \tilde {\phi} _j^k (y)} \\
		&  \leq \frac{4(N+1)}{2^n} \cdot K(y) f \oleft( \frac1{K(y)} \right) \\
		&  \leq \frac1{\dist(y,X)}\cdot [32\ell(N+1)] K(y) f \oleft( \frac1{K(y)} \right) ,
	\end{align*}
	where we used the concavity of $f$, the fact that $f(t)/t$ is decreasing  (it follows from $f(0)=0$ and the concavity), and that $\sum_{j,k} \tilde{\phi}_j^k(y) \geq 1$ by (d).
	
	Now we choose $\ell = 3$, $\delta = 1/2$, and
	\[
	f(t) = \frac{2m}{\delta} t^{1-1/m}
	\]
	which allows the existence of the function $\xi$ as required before by a simple cutoff argument applied to $\tilde\xi(t) = \chi_{[0,\infty)}(t)\left(\frac{2t}\delta\right)^m$. From \eqref{eq:aaa} we deduce that
	\[
	K(y) \leq 4\kappa_X(1/49) \leq 4 \lambda_X^6,
	\]
	we obtain also $N = \kappa_X(2/55) \leq \lambda^5$ and we take $m = \log(4 \lambda_X^6)$.
	
	We can now finish the proof by estimating
	\[
	\begin{split}
	\sum_{i,n} \abs{\nabla \phi_i^n} (y) &\leq
	\frac1{\dist(y,X)}\cdot 4 [96(N+1)] m K(y)^{1/m} \\
	&\lesssim \frac1{\dist(y,X)}\cdot \lambda^5 m e^{m/m} =
	\frac1{\dist(y,X)} \lambda^5 \log(\lambda). \qedhere
	\end{split}
	\]
\end{proof}

\section{Random projections}

\label{sec:random-projections}

The following concept has been introduced by \textcite{ohta} and by \textcite{ambrosio-puglisi}. In these articles the authors identify a generalization of a deterministic projection onto a subset, an idea that underlies several extension results. In order to understand the concept let us suppose that $Y=\mathbb{R}^k$ and $X\subset Y$ is a closed convex set. In this case, for every point $y\in Y$ there exists a unique point of $X$ with minimal distance from $y$ and so we have the projection function $P_X: Y \to X$ that is the identity on $X$ and is $1$-Lipschitz on the whole $Y$. This map allows to build a linear Lipschitz extension operator $T:\Lip(X)\to\Lip(Y)$ simply by composition $Tf = f\circ P_X$. Notice that with this definition the Lipschitz constant of $f$ is also preserved. Clearly this kind of construction works only in particular cases, due to topological obstructions. Even in the Euclidean context, the class of subset $X$ that are Lipschitz retractions of the ambient space is very small. In order to overcome this difficulty, we look for non-deterministic maps that share the same features of projections with regard to the possibility of extending functions with the method outlined above. These objects are the so-called random projections, which in some sense are a probabilistic selection of quasi-minimizers of the distance.

% In rigorous terms, we fix an ambient metric space $Y$ and a subset $X$, a map $\mu:X\to \Prob_1(X)$ that is the identity on $X$ (i.e $\mu_x=\delta_x$ for every $x\in X$) is called random projection. When a random projection $\mu$ is Lipschitz with respect to the Wasserstein distance in $\Prob_1(X)$ it can be used to construct a linear extension operator from $\Lip(X)$ to $\Lip(Y)$ by the formula
% \[
% Tf(y):=\int_X f(x) \d \mu_x(y).
% \]
% This notion of projection is very robust in the sense that it exists in a very general setting. %{\color{red} Notice that, even though it will be the case for many random projections we will use, it is not necessary to have that the measure $\mu_y$ is supported on the set of quasi-minimizer of the distance $\{ x \in X\; d(x,y) \leq C d(y,X)\}$.}  

%{\color{red} In the general case however there are many problems that can come up in this construction: non uniqueness of the points at minimal distance, non-Lipschitz projection etc... The random projection overcomes all this problems at the cost of being valued on $\Prob(X)$ and being $L$-Lipschitz with $L$ maybe greater than $1$. The idea is to have a \emph{multiple-valued} projection, and each value is taken with some probability, that imitates the classical projection of minimal distance.}
\begin{definition}[Random projection]\label{def:random-projection}
Let $X$ be a closed subspace of a metric space $(Y,\dist)$. We say that a map $\mu:Y\to\Prob(X):y\mapsto\mu_y$ is a \emph{random projection} if $\mu_x=\delta_x$ whenever $x\in X$. We say that it is a \emph{Lipschitz random projection} if $\mu\in\Lip\bigl(Y;W_1(X)\bigr)$.
% Let $X\subset (Y,\dist)$. A map $\mu\in\Lip(Y;W_1(X))$, that is $\mu:Y\to\Prob(X):y\mapsto\mu_y$ with
% \[
% \abs*{\int_X f\d\mu_y - \int_X f\d\mu_z} \leq C \dist(y,z)
% \qquad \forall f\in\Lip_1(X),
% \]
% such that $\mu_x=\delta_x$ for every $x\in X$.
\end{definition}

\begin{theorem}\label{thm:random-projection}
Let $X\subset(Y,\dist)$ be a closed subset with doubling constant $\lambda$. Then there exists a Lipschitz random projection $\mu\in\Lip\bigl(Y;W_1(X)\bigr)$ with
\[
\Lip(\mu) \lesssim \log\lambda.
\]
% Moreover there is a $\mu$ with the property that $\mu_y$ is a sum of less than $2\kappa_X(\eps)$ deltas for every $y\in Y$ and another $\mu$ such that $\mu_y\ll\meas_X$ for every $y\in Y\setminus X$.
\end{theorem}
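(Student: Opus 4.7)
The plan is to use the Lipschitz partition of unity $\{V_i, \phi_i, x_i\}$ from \autoref{prop:lipschitz-partition} to define $\mu$ explicitly, and then verify the Lipschitz bound via the Kantorovich--Rubinstein duality of $W_1$. Concretely, I would set $\mu_y = \sum_i \phi_i(y)\,\delta_{x_i}$ for $y \in Y \setminus X$ and $\mu_x = \delta_x$ for $x \in X$. Since $\sum_i \phi_i \equiv 1$ and $x_i \in X$, each $\mu_y$ is a probability on $X$, and the two definitions match continuously at $\partial X$: if $y \to x \in X$, property (iii) gives $\dist(x_i, x) \leq \dist(x_i, y) + \dist(y, x) \lesssim \dist(y, X) + \dist(y, x) \to 0$ for every active $i$, so $W_1(\mu_y, \delta_x) \to 0$.

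For the Lipschitz bound I would take a $1$-Lipschitz $f: X \to \setR$ and use the identity $\sum_i (\phi_i(y) - \phi_i(y')) = 0$ to write, for any $x_* \in X$,
\[
\int f\,d(\mu_y - \mu_{y'}) = \sum_i (\phi_i(y) - \phi_i(y'))\bigl(f(x_i) - f(x_*)\bigr),
\]
and then proceed by cases. If $y, y' \in X$ the estimate is trivial. If $y \in Y \setminus X$ and $y' \in X$, choose $x_* = y'$: property (iii) combined with $\dist(y, X) \leq \dist(y, y')$ yields $\dist(x_i, y') \lesssim \dist(y, y')$ on active indices, hence $W_1(\mu_y, \mu_{y'}) \lesssim \dist(y, y')$. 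If $y, y' \in Y \setminus X$ and $\dist(y, y') \geq \dist(y, X)/4$, pick $\bar y \in X$ with $\dist(y, \bar y) \leq 2\dist(y, X) \lesssim \dist(y, y')$ and conclude through $\delta_{\bar y}$ by triangle inequality in $W_1$, invoking the previous case twice.

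The only genuinely nontrivial case is when $y, y' \in Y \setminus X$ and $\dist(y, y') < \dist(y, X)/4$: here I would choose $x_* \in X$ with $\dist(x_*, y) \leq 2\dist(y, X)$, so that $|f(x_i) - f(x_*)| \lesssim \dist(y, X)$ on active indices, and combine with property (ii) to obtain
\[
\left| \int f\,d(\mu_y - \mu_{y'}) \right| \lesssim \dist(y, X) \sum_i |\phi_i(y) - \phi_i(y')| \lesssim \log \lambda \cdot \dist(y, y').
\]
Taking the supremum over $f$ then delivers $\Lip(\mu) \lesssim \log\lambda$.

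The main obstacle I foresee is that property (ii) is phrased as a pointwise slope bound, which in a general metric space does not automatically upgrade to a Lipschitz estimate. I would resolve this by unfolding the explicit construction in \autoref{lem:lip_part}: since $\tilde\phi_i = \dist^m(\cdot, (V_i)^c)$ is genuinely Lipschitz with constant $\lesssim m \dist(y, X)^{m-1}$ on $B(y, \dist(y, X)/4)$, and since the denominator $\sum_{j,k} \tilde\phi_j^k$ is bounded below there by $\gtrsim \dist(y, X)^m$ (as shown in the proof of that lemma), each $\phi_i$ is truly Lipschitz on this neighborhood; the same power-mean trick used inside the slope proof then reproduces the sharp $\log\lambda$ constant for the sum $\sum_i |\phi_i(y) - \phi_i(y')|$.
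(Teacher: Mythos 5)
Your construction of $\mu_y=\sum_i\phi_i(y)\delta_{x_i}$, the use of Kantorovich duality, and the case analysis (trivial on $X$, direct estimate for $y\in Y\setminus X$ versus $x\in X$, routing through a near point of $X$ when $\dist(y,y')\gtrsim\dist(y,X)$) all coincide with the paper's proof. You have also correctly isolated the one genuine difficulty: property (ii) of \autoref{prop:lipschitz-partition} is a pointwise slope bound, which in a general metric space does not upgrade to a Lipschitz bound. The paper resolves this by isometrically embedding $Y$ into the Banach space $C_b(Y)$ via $y\mapsto\dist(\plchldr,y)-\dist(\plchldr,y_0)$ \emph{before} building the partition; on a Banach space a function whose slope is bounded by $L$ at every point is $L$-Lipschitz (restrict to segments), so the pointwise estimate suffices and no finite-difference version of the partition estimate is ever needed.

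Your alternative resolution is where the gap lies. The sharp constant in \autoref{lem:lip_part} comes from the power-mean inequality applied to the ratio $\sum_l \dist_l^{m-1}/\sum_l\dist_l^m$, where the \emph{same} numbers $\dist_l=\dist\bigl(y,(V_l)^c\bigr)$ appear in numerator and denominator, and the final $1/m$-th root cancels the $m$-th powers. In the finite-difference version the numerator involves $\max\bigl(\dist(y,(V_l)^c),\dist(y',(V_l)^c)\bigr)^{m-1}$ while the denominator is still $\sum_l\dist(y,(V_l)^c)^m$; replacing one family by the other costs a factor on the order of $\bigl(1+\dist(y,y')/\dist(y,X)\bigr)^m$, which with $m=\log_2\lambda$ and $\dist(y,y')\sim\dist(y,X)/4$ is a \emph{power} of $\lambda$, not a constant. (The same phenomenon already affects your intermediate claim that the denominator is $\gtrsim\dist(y,X)^m$ on $B\bigl(y,\dist(y,X)/4\bigr)$: the honest lower bound is $(3\dist(y,X)/16)^m$, whose implied constant is $\lambda^{-\log_2(16/3)}$.) The argument can be repaired by shrinking the ``near'' case to $\dist(y,y')\le c\,\dist(y,X)/\log\lambda$, for which the mismatch factor becomes $(1+c/m)^{O(m)}=O(1)$, and observing that the complementary case still closes by routing through $X$ since then $\dist(y,X)\lesssim\log\lambda\cdot\dist(y,y')$. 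Either add that quantitative refinement, or adopt the paper's embedding device, which disposes of the issue in one line.
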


\begin{remark}\label{rem:lipext}
Notice that any Lipschitz random projection $\mu$ gives automatically a bounded linear extension operator $T: \Lip(X, Z) \to \Lip(Y,Z)$ for every Banach space $Z$ in the following way:
\[
(Tf)(y) = \int_X f(x) \d \mu_y (x).
\]
In fact, thanks to \eqref{eqn:w1lip} we have
\[
\abs{(Tf)(y) - (Tf)(y')}
= \abs*{\int_X f(x) \d (\mu_y - \mu_{y'})}
\leq \Lip(f) \Lip(\mu) \dist(y,y').
\]
Therefore the proof of \autoref{thm:random-projection} can be seen as a proof of the existence of a bounded linear extension operator (see \autoref{thm:lipschitz-extension}).
\end{remark}

\begin{proof}
Without loss of generality we can assume that $Y$ is a Banach space, by possibly embedding $Y\subset C_b(Y)$ thanks to the isometric immersion
\[
y \mapsto \dist(\plchldr,y) - \dist(\plchldr,y_0),
\]
where $y_0\in Y$ is a generic fixed point: this is useful because in order to prove that some function $F:Y \in Z$ is $L$-Lipschitz we need only to prove that its slope is bounded by $L$.

Let $\{V_i, \phi_i, x_i\}_i$ be given by \autoref{prop:lipschitz-partition}. Let us then define the random projection
\[
\mu_y = \sum_{i} \phi_i(y) \delta_{x_i} \quad \text{for $y\in Y\setminus X$},
\qquad \mu_y = \delta_y \quad \text{for $y\in X$}.
\]

Given a function $f\in\Lip_1(X)$, for $y\in Y\setminus X$ we can compute the slope
\[
\begin{split}
\abs*{\nabla_y \int_X f(x) \d\mu_y(x)}
&= \abs*{\nabla_y \sum_{i} \phi_i(y) f(x_i)} \\
&= \abs*{\nabla_y \sum_{i} \phi_i(y) [f(x_i)-f(x_{i_0})]} \\
&\leq \sum_{i} \abs*{\nabla_y \phi_i(y)}\cdot\abs{f(x_i)-f(x_{i_0})} \\
&\leq \sum_{i} \abs*{\nabla_y \phi_i(y)}\cdot\dist(x_i,x_{i_0}) ,
\end{split}
\]
where $i_0$ is any fixed index for which $y\in V_{i_0}$. In order for $\abs*{\nabla_y \phi_i(y)}$ to be non-zero, one must have $y\in V_i$, therefore from the properties of the points $x_i$'s we infer that $\dist(x_i,x_{i_0})\lesssim\dist(y,X)$. With this observation we can continue the previous estimate and obtain
\[
\abs*{\nabla_y \int_X f(x) \d\mu_y(x)}
\lesssim \sum_{i} \abs*{\nabla_y \phi_i(y)}\cdot\dist(y,X)
\lesssim \frac{\log\lambda}{\dist(y,X)}\dist(y,X) = \log\lambda .
\]

For points $x\in X$ and $y\in Y\setminus X$ instead we have the estimate
\[
\begin{split}
\abs*{\int_X f(z)\d\mu_y(z) - \int_X f(z)\d\mu_x(z)}
&= \abs*{\sum_i\phi_i(y)[f(x_i)-f(x)]} \\
&\leq \sum_i \phi_i(y) \bigl(
	\abs*{f(x_i)-f(x_{i_0})} + \abs*{f(x_{i_0})-f(x)} \bigr) \\
&\leq \sum_i \phi_i(y) [\dist(x_i,x_{i_0}) + \dist(x_{i_0},x)] \\
&\lesssim \dist(y,X) + \dist(x_{i_0},x) \\
&\leq \dist(y,X) + \dist(x_{i_0},y) + \dist(y,x) \\
&\lesssim \dist(y,x) ,
\end{split}
\]
so that we have a (better) bound on the slope also at the points in $X$. This fact shows that the map $y\mapsto \int_X f\d\mu_y$ has Lipschitz constant less than $\log\lambda$, up to a universal multiplicative constant.

Finally, \autoref{def:w1} of $W_1$ implies that $\Lip(\mu)\lesssim\log\lambda$, indeed.
\end{proof}

% \begin{proposition}
% If $\mu\in\Lip(Y;W_1(X))$, then $\mu:Y\to W_p(X)$ is $(1/p')$-Hölder continuous.
% \end{proposition}

We now move on to the corresponding $C^1$ concept of random projection.

\begin{definition}\label{def:regular-random-projection}
Let $X$ be a subset of a Banach space $Y$. We say that a map $\mu:Y\to\Prob(X)$ is a \emph{regular random projection} if the following conditions hold:
\begin{enumerate}
\item\label{it:rrp-1} for every $y\in Y$ the measure $\mu_y$ is concentrated on $B\bigl(y, \eta\dist(y,X)\bigr)$ for some $\eta>0$;
\item\label{it:rrp-2} for all $f \in C(X)$ the map $F(y) = \int_X f(x)\d\mu_y(x)$ is well defined, belongs to $C(Y)\cap C^1(Y\setminus X)$, and there exists $\nu: Y\setminus X\to \Meas(X;Y^*)$ such that
\begin{equation}\label{eq:differential-F}
\d F_y = \int_X f(x)\d\nu_y(x) \qquad \text{for all $y\in Y\setminus X$};
\end{equation}
\item\label{it:rrp-3} for all $y\in Y\setminus X$ the measure $\nu_y$ is concentrated on $B\bigl(y,\eta\dist(y,X)\bigr)$ and its total variation can be estimated with
\[
\normtv{\nu_y}\leq \frac{C_X}{\dist(y,X)}.
\]
\end{enumerate}
\end{definition}

\begin{remark}\label{rmk:rrp-0-average}
With the definition above we have that $\nu_x(X)=0$ for all $x\in Y\setminus X$, since
\[
\nu_y(X)=\int_X 1\d \nu_y=\d \left(\int_X 1 \d \mu_x \right)_y=\d 1_y=0.
\]
\end{remark}

\begin{theorem}[Regular random projection]\label{thm:regular-random-projection}
Let $Y$ be a Banach space whose norm belongs to $C^1(Y\setminus\{0\})$ and let $X\subset Y$ be a closed subset with doubling constant $\lambda$. Then there exists a regular random projection $\mu_y$ whose associated $\nu_y$ has total variation
\[
\normtv{\nu_y} \lesssim \frac{\lambda^4\log\lambda}{\dist(y,X)}.
\]
\end{theorem}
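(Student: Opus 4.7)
The plan is to mimic the proof of \autoref{thm:random-projection}, simply replacing the Lipschitz partition of unity with the $C^1$ partition $\{V_i,\phi_i,x_i\}_i$ provided by \autoref{prop:C1-partition}. I would define
\[
\mu_y=\sum_i\phi_i(y)\delta_{x_i} \quad\text{for $y\in Y\setminus X$},
\qquad \mu_y=\delta_y \quad\text{for $y\in X$},
\]
and then verify the three conditions in \autoref{def:regular-random-projection}. Condition (\ref{it:rrp-1}) is immediate: for $y\in Y\setminus X$, $\phi_i(y)>0$ forces $y\in V_i$, so (\ref{it:c1part-3}) of \autoref{prop:C1-partition} gives $\dist(y,x_i)\lesssim\dist(y,X)$, and thus $\mu_y$ is supported in $B\bigl(y,\eta\dist(y,X)\bigr)$ for a universal $\eta$; the case $y\in X$ is trivial.

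For (\ref{it:rrp-2}), the natural candidate for the accompanying $Y^*$-valued measure is
\[
\nu_y=\sum_i \d\phi_i(y)\,\delta_{x_i}\in\Meas(X;Y^*),
\]
which, by the local finiteness of the covering in (\ref{it:c1part-1}), is actually a finite sum on a neighbourhood of every $y\in Y\setminus X$. Given $f\in C(X)$, the function $F(y)=\sum_i\phi_i(y)f(x_i)$ is then a locally finite sum of $C^1$ functions on $Y\setminus X$, hence lies in $C^1(Y\setminus X)$, and by construction its Fréchet differential at $y$ is $\sum_i f(x_i)\,\d\phi_i(y)=\int_X f\,\d\nu_y$. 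The only nontrivial point is the continuous extension of $F$ across $X$: if $y\to x\in X$, every index $i$ contributing to the sum satisfies $\dist(x_i,x)\leq\dist(x_i,y)+\dist(y,x)\lesssim\dist(y,x)$ by the localisation above, so the continuity of $f$ at $x$ yields $F(y)\to f(x)$.

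Condition (\ref{it:rrp-3}) then follows from the same localisation (for the support of $\nu_y$) and from the slope bound in (\ref{it:c1part-2}):
\[
\normtv{\nu_y}=\sum_i\|\d\phi_i(y)\|_{Y^*}=\sum_i|\nabla\phi_i|(y)\lesssim\frac{\lambda^4\log\lambda}{\dist(y,X)},
\]
where the identity $\|\d\phi(y)\|_{Y^*}=|\nabla\phi|(y)$ for $\phi\in C^1(Y)$ is standard. I expect the only moderately subtle step to be the continuous extension of $F$ to $X$; once that is in place, the rest of the argument is essentially a mechanical transcription of the Lipschitz case.
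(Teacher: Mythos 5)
Your proposal is correct and coincides with the paper's own proof: the same measure $\mu_y=\sum_i\phi_i(y)\delta_{x_i}$ built from the $C^1$ partition of unity, the same $\nu_y=\sum_i\d(\phi_i)_y\,\delta_{x_i}$, and the same appeal to properties (\ref{it:c1part-1})--(\ref{it:c1part-3}) of \autoref{prop:C1-partition}. Your verification of the continuity of $F$ across $X$ is in fact slightly more explicit than what appears in the paper.
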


\begin{proof}
Let $\{V_i, \phi_i, x_i\}_i$ be given by \autoref{prop:C1-partition}. Let us then define the random projection
\[
\mu_y = \sum_{i} \phi_i(y) \delta_{x_i} \quad \text{for $y\in Y\setminus X$},
\qquad \mu_y = \delta_y \quad \text{for $y\in X$}.
\]
Property \eqref{it:rrp-1} of \autoref{def:regular-random-projection} follows immediately from \eqref{it:c1part-3} of \autoref{prop:C1-partition}. Let us fix $f\in C(X)$. The function $F(y) = \int_X f(x)\d\mu_y(x)$ is clearly well defined since the measure $\mu_y$ is supported on a finite number of points. Moreover, it is also $C^1(Y\setminus X)$ because the coefficients $\phi_i(y)$ are $C^1$ themselves. Given a point $y\in Y\setminus X$, it is immediate to check that the differential of $F$ at the point $y$ is represented through \eqref{eq:differential-F} by the vector measure
\[
\nu_y = \sum_i \d(\phi_i)_y \delta_{x_i}.
\]
Finallly, \eqref{it:rrp-3} of \autoref{def:regular-random-projection} follows from \eqref{it:c1part-2} of \autoref{prop:C1-partition}.
\end{proof}

\section{Linear extension operators}
\label{sec:linear-extensions}

\subsection{Lipschitz}\label{subsec:lipext}

In this section we state and prove the main result about the extendability of Lipschitz functions. The theorem has already appeared in \cite{lee-naor-05}, but we provide two independent and shorter proofs.

\begin{theorem}\label{thm:lipschitz-extension}
Let $(Y,\dist)$ be a metric space and $X\subset Y$ a closed subset with finite doubling constant $\lambda$; let moreover $Z$ be a Banach space. Then there exists a linear extension operator $T:\Lip(X;Z)\to\Lip(Y;Z)$ such that
\[
\Lip(Tf) \lesssim \log\lambda \Lip(f) \qquad \forall f\in\Lip(X;Z).
\]
\end{theorem}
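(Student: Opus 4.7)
The plan is to define $T$ directly through the Lipschitz random projection $\mu \in \Lip\bigl(Y;W_1(X)\bigr)$ provided by \autoref{thm:random-projection}, exactly as suggested in \autoref{rem:lipext}. For every $y \in Y$ we set
\[
(Tf)(y) = \int_X f(x)\d\mu_y(x),
\]
where the integral is understood in the Bochner sense (this is well-defined because $\mu_y \in \Prob_1(X)$ and $f:X\to Z$ is Lipschitz, hence Bochner-integrable against any compactly supported probability measure; the supports are actually bounded by construction). Linearity of $T$ follows immediately from linearity of the Bochner integral in $f$, and since $\mu_x = \delta_x$ for $x\in X$, the map $T$ genuinely extends $f$, i.e.\ $(Tf)(x)=f(x)$ for all $x\in X$.

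The only non-trivial point is the Lipschitz estimate, which needs to be upgraded from the scalar case already treated in \autoref{thm:random-projection} to the Banach-valued case. The plan is to reduce to scalars by duality. Given $y, y' \in Y$, take any $z^*\in Z^*$ with $\norm{z^*}\leq 1$ and consider $g = z^* \circ f : X \to \setR$: this is a real-valued Lipschitz function with $\Lip(g) \leq \norm{z^*}\Lip(f) \leq \Lip(f)$. By linearity and continuity of $z^*$ applied to the Bochner integral, and then by \eqref{eqn:w1lip},
\[
\bigl\langle z^*, (Tf)(y) - (Tf)(y') \bigr\rangle
= \int_X g(x)\d\mu_y(x) - \int_X g(x)\d\mu_{y'}(x)
\leq \Lip(f)\, W_1(\mu_y,\mu_{y'}).
\]
Since $\mu$ is Lipschitz with $\Lip(\mu) \lesssim \log\lambda$, the right-hand side is bounded by $\Lip(f)\cdot\Lip(\mu)\cdot\dist(y,y') \lesssim \log\lambda\cdot\Lip(f)\cdot\dist(y,y')$. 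Taking the supremum over $z^*$ in the unit ball of $Z^*$ and invoking Hahn-Banach to recover the norm in $Z$ gives
\[
\norm{(Tf)(y)-(Tf)(y')}_Z \lesssim \log\lambda \cdot \Lip(f)\cdot\dist(y,y'),
\]
which is exactly the claimed estimate.

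The main obstacle here is conceptually mild: one must verify that the Bochner integral is well-defined and that the scalar dual inequality \eqref{eqn:w1lip} transfers correctly to the vector-valued setting. Both are standard once one notices that $\mu_y$ is finitely supported in the construction behind \autoref{thm:random-projection}, so no measurability subtleties arise. In short, all the heavy lifting has already been done in \autoref{prop:lipschitz-partition} and \autoref{thm:random-projection}; the extension theorem is then just a one-line definition plus Hahn-Banach.
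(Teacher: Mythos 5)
Your proof is correct and coincides with the route the paper itself endorses: \autoref{thm:lipschitz-extension} is deduced from the Lipschitz random projection of \autoref{thm:random-projection} exactly as sketched in \autoref{rem:lipext}, and your Hahn--Banach duality step merely makes explicit the (slightly implicit) passage from the scalar inequality \eqref{eqn:w1lip} to the $Z$-valued setting. The paper additionally writes out a second, self-contained ``direct proof'' based on a doubling measure on $X$ rather than the partition of unity, but that is an alternative presentation, not a different result.
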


As already observed in \autoref{rem:lipext}, this result can be obtained already as a direct consequence of \autoref{thm:random-projection}, but we wanted also to provide a self-contained proof that does not require the construction of a partition of unity, but instead exploits the existence of a doubling measure $\meas$ supported on the whole $X$.

\begin{proof}[Direct proof]
Without loss of generality we can assume that $Y$ is a Banach space, by embedding $Y\subset C_b(Y)$ thanks to the isometric immersion
\[
y \mapsto \dist(\plchldr,y) - \dist(\plchldr,y_0),
\]
where $y_0\in Y$ is a fixed point. In particular we can assume that also $X$ is complete by considering its new closure. Let $\meas$ be a doubling measure on $X$, provided for instance by \cite{doubling}. We consider the random projection $\mu:Y\to\Prob(X)$ absolutely continuous with respect to $\meas$ given by
\[
\mu_y = u_y(x) \meas =
	\frac{\phi^m\left(\frac{\dist(y,x)}{\dist(y,X)}\right)}
	{\int_X \phi^m\left(\frac{\dist(y,z)}{\dist(y,X)}\right) \d\meas(z)}
	\meas,
\]
where $\phi\in C^1\bigl([0,\infty);[0,1]\bigr)$ is such that $\phi(t)=1$ for $t\leq2$, $\phi(t)=0$ for $t\geq3$ and $m>0$ is a parameter to be optimized later. Notice that the denominator is non-zero because $\meas$ is doubling. Roughly speaking, this $\mu$ has to be intended as a suitably smoothed version of
\[
\tilde\mu_y = \procond{\meas}{B\bigl(y,3\dist(y,X)\bigr)}.
\]

Given a function $f\in\Lip(X;Z)$, we define its extension $Tf$ by
\[
Tf(y) = \int_X f(x) \d\mu_y(x).
\]
In order to compute $\Lip(Tf)$, we now proceed by estimating the slope of the density $u_y$.

By Leibniz and Fatou\footnote{To apply the latter in order to the pass the slope inside the integral, we need also that
\[
\sup_{\substack{z\in X \\ \dist(y,y')<\frac12\dist(y,X)}}
	\frac1{\dist(y,y')} \abs*{
	\phi^m\left(\frac{\dist(y',z)}{\dist(y',X)}\right)
	- \phi^m\left(\frac{\dist(y,z)}{\dist(y,X)}\right) } < \infty .
\]
} we have
% \[
% \begin{split}
% \abs{\nabla_y u_y(x)}
% &\leq \frac{\abs*{\nabla_y \phi^m\left(\frac{\dist(y,x)}{\dist(y,X)}\right)}}
% 	{\int_X \phi^m\left(\frac{\dist(y,z)}{\dist(y,X)}\right) \d\meas(z)}
% 	+ \frac{\phi^m\left(\frac{\dist(y,x)}{\dist(y,X)}\right)
% 		\abs*{\nabla_y \int_X \phi^m\left(\frac{\dist(y,z)}{\dist(y,X)}\right) \d\meas(z)}}
% 	{\left[\int_X \phi^m\left(\frac{\dist(y,z)}{\dist(y,X)}\right) \d\meas(z)\right]^2} \\
% &\leq \frac{m\phi^{m-1}\left(\frac{\dist(y,x)}{\dist(y,X)}\right)
% 		\phi'\left(\frac{\dist(y,x)}{\dist(y,X)}\right)
% 		\abs*{\nabla_y \left(\frac{\dist(y,x)}{\dist(y,X)}\right)}}
% 	{\int_X \phi^m\left(\frac{\dist(y,z)}{\dist(y,X)}\right) \d\meas(z)}
% 	+ \frac{\phi^m\left(\frac{\dist(y,x)}{\dist(y,X)}\right)
% 		\int_X \abs*{\nabla_y\phi^m\left(\frac{\dist(y,z)}{\dist(y,X)}\right)} \d\meas(z)}
% 	{\left[\int_X \phi^m\left(\frac{\dist(y,z)}{\dist(y,X)}\right) \d\meas(z)\right]^2} \\
% &= \frac{m\phi^{m-1}\left(\frac{\dist(y,x)}{\dist(y,X)}\right)
% 		\phi'\left(\frac{\dist(y,x)}{\dist(y,X)}\right)
% 		\abs*{\nabla_y \left(\frac{\dist(y,x)}{\dist(y,X)}\right)}}
% 	{\int_X \phi^m\left(\frac{\dist(y,z)}{\dist(y,X)}\right) \d\meas(z)} \\
% 	&\qquad+ \frac{\phi^m\left(\frac{\dist(y,x)}{\dist(y,X)}\right)
% 		\int_X m\phi^{m-1}\left(\frac{\dist(y,z)}{\dist(y,X)}\right)
% 		\phi'\left(\frac{\dist(y,z)}{\dist(y,X)}\right)
% 		\abs*{\nabla_y \left(\frac{\dist(y,z)}{\dist(y,X)}\right)} \d\meas(z)}
% 	{\left[\int_X \phi^m\left(\frac{\dist(y,z)}{\dist(y,X)}\right) \d\meas(z)\right]^2} \\
% \end{split}
% \]
\[
\begin{split}
\abs{\nabla_y u_y(x)}
% &\leq \frac{\abs*{\nabla_y \phi^m\left(\frac{\dist(y,x)}{\dist(y,X)}\right)}}
% 	{\int_X \phi^m\left(\frac{\dist(y,z)}{\dist(y,X)}\right) \d\meas(z)}
% 	+ \frac{\phi^m\left(\frac{\dist(y,x)}{\dist(y,X)}\right)
% 		\abs*{\nabla_y \int_X \phi^m\left(\frac{\dist(y,z)}{\dist(y,X)}\right) \d\meas(z)}}
% 	{\left[\int_X \phi^m\left(\frac{\dist(y,z)}{\dist(y,X)}\right) \d\meas(z)\right]^2} \\
&\leq \frac{\abs*{\nabla_y \phi^m\left(\frac{\dist(y,x)}{\dist(y,X)}\right)}}
	{\int_X \phi^m\left(\frac{\dist(y,z)}{\dist(y,X)}\right) \d\meas(z)}
	+ \frac{\phi^m\left(\frac{\dist(y,x)}{\dist(y,X)}\right)
		\int_X \abs*{\nabla_y\phi^m\left(\frac{\dist(y,z)}{\dist(y,X)}\right)} \d\meas(z)}
	{\left[\int_X \phi^m\left(\frac{\dist(y,z)}{\dist(y,X)}\right) \d\meas(z)\right]^2}
\end{split}
\]
Integrating in $x$ and simplifying we obtain
\[
\begin{split}
\int_X \abs{\nabla_y u_y(x)} \d\meas(x)
= 2 \frac{\int_X \abs*{\nabla_y\phi^m\left(\frac{\dist(y,z)}{\dist(y,X)}\right)} \d\meas(z)}
	{\int_X \phi^m\left(\frac{\dist(y,z)}{\dist(y,X)}\right) \d\meas(z)}.
\end{split}
\]
One can then compute
\[
\begin{split}
\abs*{\nabla_y\phi^m\oleft(\frac{\dist(y,z)}{\dist(y,X)}\right)}
&\leq m\phi^{m-1}\oleft(\frac{\dist(y,z)}{\dist(y,X)}\right)
	\abs*{\phi'\oleft(\frac{\dist(y,z)}{\dist(y,X)}\right)} \cdot
	\abs*{\nabla_y \left(\frac{\dist(y,z)}{\dist(y,X)}\right)} \\
&\leq m\phi^{m-1}\oleft(\frac{\dist(y,z)}{\dist(y,X)}\right)
	\abs*{\phi'\oleft(\frac{\dist(y,z)}{\dist(y,X)}\right)}
	\frac1{\dist(y,X)} \left(1+\frac{\dist(y,z)}{\dist(y,X)}\right).
\end{split}
\]

Plugging this into the previous equation, observing that the ratio $\frac{\dist(y,z)}{\dist(y,X)}<3$ where $\phi$ is not vanishing and using H\"older inequality in the second step\footnote{With exponents $m/(m-1)$ and $m$.} we get
\[
\begin{split}
\int_X \abs{\nabla_y u_y(x)} \d\meas(x)
&\leq \frac{8m}{\dist(y,X)} \cdot \frac
	{\int_X \phi^{m-1}\left(\frac{\dist(y,z)}{\dist(y,X)}\right)
		\abs*{\phi'\left(\frac{\dist(y,z)}{\dist(y,X)}\right)} \d\meas(z)}
	{\int_X \phi^m\left(\frac{\dist(y,z)}{\dist(y,X)}\right) \d\meas(z)} \\
&\leq \frac{8m}{\dist(y,X)} \left(
	\frac{\int_X \abs*{\phi'\left(\frac{\dist(y,z)}{\dist(y,X)}\right)}^m \d\meas(x)}
	{\int_X \phi^m\left(\frac{\dist(y,z)}{\dist(y,X)}\right) \d\meas(x)}
	\right)^{1/m} \\
&\leq \frac{8m}{\dist(y,X)} \left(
		\frac{\meas\bigl(B\bigl(y,3\dist(y,X)\bigr)\bigr)}
		{\meas\bigl(B\bigl(y,2\dist(y,X)\bigr)\bigr)}
	\right)^{1/m} .
\end{split}
\]
The ratio appearing in the last formula is related to the doubling constant $\lambda$, however one has to be a bit careful because the point $y$ does not belong to $X$. By fixing a point $\tilde y\in X$ such that $\dist(y,\tilde y)\leq(1+\eps)\dist(y,X)$ we get
\[
\frac{\meas\bigl(B\bigl(y,3\dist(y,X)\bigr)\bigr)}
		{\meas\bigl(B\bigl(y,2\dist(y,X)\bigr)\bigr)}
\leq \frac{\meas\bigl(B\bigl(\tilde y,(4+\eps)\dist(y,X)\bigr)\bigr)}
		{\meas\bigl(B\bigl(\tilde y,(1-\eps)\dist(y,X)\bigr)\bigr)}
\leq \lambda^3.
\]
Hence
\[
\int_X \abs{\nabla_y u_y(x)} \d\meas(x)
\lesssim \frac{m\lambda^{3/m}}{\dist(y,X)}
\lesssim \frac{\log\lambda}{\dist(y,X)}
\]
by choosing $m=\frac13\log\lambda$.

We can finally estimate the Lipschitz constant of $Tf$. We start with its slope at $y\in Y\setminus X$. Fixing a point $\tilde y\in X$ such that $\dist(y,\tilde y)\lesssim\dist(y,X)$, we have
\[
\begin{split}
\abs*{\nabla Tf}(y)
&\leq \abs*{\nabla_y \int_X [f(x)-f(\tilde y)] \d\mu_y(x)} \\
&\leq \int_{B\bigl(y,3\dist(y,X)\bigr)} \abs{f(x)-f(\tilde y)}\cdot \abs{\nabla_y u_y(x)} \d\meas(x) \\
&\lesssim \int_{B\bigl(y,3\dist(y,X)\bigr)} \Lip(f)[\dist(x,y)+\dist(y,\tilde y)]\frac{\log\lambda}{\dist(y,X)}\d\mu_y(x) \\
&\lesssim \Lip(f)\dist(y,X)\frac{\log\lambda}{\dist(y,X)} \\
% &\leq \frac{8(4+\eps)e\log\lambda}{3\dist(y,X)} \Lip(f)\dist(y,X) \\
&\lesssim \log\lambda\Lip(f),
\end{split}
\]
where we were able to bring the slope inside the integral because the difference ratios near $y$ are uniformly bounded in $x$. Similarly, for $x\in X$ and $y\in Y\setminus X$ one can compute
\[
\begin{split}
\abs{Tf(y)-Tf(x)}
&\leq \int_X \abs{f(z)-f(x)}\d\mu_y(z) \\
&\leq \Lip(f) \int_{B\bigl(y,3\dist(y,X)\bigr)} \dist(z,x)\d\mu_y(z) \\
&\leq \Lip(f) \int_{B\bigl(y,3\dist(y,X)\bigr)} [\dist(z,y)+\dist(y,x)]\d\mu_y(z) \\
&\lesssim \Lip(f) [\dist(y,X)+\dist(y,x)] \\
&\lesssim \Lip(f)\dist(x,y)
\end{split}
\]
These two computations prove the Lipschitzianity of the map $Tf$, whith constant $\Lip(Tf)\lesssim\log\lambda\Lip(f)$, since the space $Y$ is Banach.
\end{proof}

\begin{remark}
Actually, the previous proof is an alternative self-contained construction of a Lipschitz random projection $\mu$ that does not use a Lipschitz partition of unity.
\end{remark}

\subsection{Whitney}

The goal of this section is to generalize Whitney's extension theorem \cite{whitney} to Banach spaces.

Let $Y$ be a Banach space and let $X\subset Y$ be a closed subset of $X$, we assume that $f:X\to \setR$ and $L:X\to Y^*$ are given functions. We define
\[
R(x,y)=f(y)-f(x)-L_x(y-x) \qquad x,y\in X.
\]
Our aim is to find conditions on $R$ and $X$ in order to have a $C^1$ extension of $f$ at the whole $Y$ and we want that its differential coincides with $L$ in $X$. The classical Whitney's extension theorem ensures that when $Y=\setR^n$ and $R(x,y)=o(\abs{x-y})$ in a suitable sense then the $C^1$ extension there exists. Our result is the following:

\begin{theorem}\label{thm:whitney}
Let $Y$ be a Banach space whose norm belongs to $C^1(Y\setminus\{0\})$ and let $X\subset Y$ be a closed subset with doubling constant $\lambda$. Given two continuous functions $f:X\to\setR$ and $L:X\to Y^*$, define the remainder
\[
R(x,y)=f(y)-f(x)-L_x(y-x) \qquad \text{for $x,y\in X$, $x\neq y$}
\]
and assume that the function
\[
(x,y)\mapsto \frac{R(x,y)}{\abs{y-x}}
\]
can be extended to a continuous function on $X\times X$ that takes the value $0$ where $y=x$. Then there exists an extension $\tilde f\in C^1(Y)$ such that $d\tilde f_x=L_x$ for all $x\in X$.

Moreover, the extension operator $(f,L)\mapsto\tilde f$ is linear.
\end{theorem}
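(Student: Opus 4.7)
The plan is to mirror the classical Whitney argument, using the regular random projection from \autoref{thm:regular-random-projection} in place of a hand-built $C^1$ partition. Let $\mu_y=\sum_i\phi_i(y)\delta_{x_i}$ be the random projection of \autoref{thm:regular-random-projection}, built on the $C^1$ partition of unity $\{\phi_i,x_i\}$ of \autoref{prop:C1-partition}. Given $(f,L)$, define $\tilde f(x)=f(x)$ on $X$ and
\[
\tilde f(y) = \int_X \bigl[f(x) + L_x(y-x)\bigr]\d\mu_y(x) = \sum_i \phi_i(y)\bigl[f(x_i) + L_{x_i}(y-x_i)\bigr]
\]
for $y\in Y\setminus X$. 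Linearity of $(f,L)\mapsto\tilde f$ is automatic. Since the sum is locally finite with $C^1$ summands, $\tilde f\in C^1(Y\setminus X)$ and
\[
d\tilde f_y = \sum_i d(\phi_i)_y \cdot g_i(y) + \sum_i \phi_i(y) L_{x_i},
\qquad g_i(y) := f(x_i) + L_{x_i}(y-x_i).
\]

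For differentiability at $x_0\in X$ with $d\tilde f_{x_0}=L_{x_0}$, I would exploit that $\mu_y$ is a probability measure to write
\[
\tilde f(y) - f(x_0) - L_{x_0}(y-x_0) = \int_X \bigl[R(x_0,x) + (L_x-L_{x_0})(y-x)\bigr]\d\mu_y(x).
\]
On the support of $\mu_y$ one has $\abs{x-y}\leq\eta\dist(y,X)\leq\eta\abs{y-x_0}$, whence $\abs{x-x_0}\leq(\eta+1)\abs{y-x_0}$; the assumption on $R$ and the continuity of $L$ then bound the integrand by $o(\abs{y-x_0})$ uniformly as $y\to x_0$, giving differentiability at $x_0$.

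The delicate step is continuity of $d\tilde f$ at $x_0\in X$. For $y\in X$ one has $d\tilde f_y=L_y\to L_{x_0}$ by continuity of $L$. For $y\in Y\setminus X$ approaching $x_0$, comparing naively with $L_{x_0}$ produces a term of size $\omega(y)\cdot\abs{y-x_0}/\dist(y,X)$, and this ratio is unbounded in general (take $X=\setR\times\{0\}\subset\setR^2$ and $y=(t,t^2)\to 0$). The cure is to compare instead with $L_{\tilde y}$, where $\tilde y\in X$ is an approximate nearest point, $\abs{\tilde y-y}\leq 2\dist(y,X)$. Using $\sum_i d(\phi_i)_y=0$,
\[
d\tilde f_y - L_{\tilde y} = \sum_i d(\phi_i)_y \bigl[R(\tilde y,x_i) + (L_{x_i}-L_{\tilde y})(y-x_i)\bigr] + \sum_i \phi_i(y)(L_{x_i}-L_{\tilde y}).
\]
On the relevant supports $\abs{x_i-\tilde y},\abs{y-x_i}\lesssim\dist(y,X)$, so the moduli of continuity of $R/\abs{y-x}$ and of $L$ bound each bracket by $\omega(y)\dist(y,X)$ with $\omega(y)\to 0$; combined with $\normtv{\nu_y}\lesssim\lambda^4\log\lambda/\dist(y,X)$ from \autoref{thm:regular-random-projection}, the first sum is $\lesssim\omega(y)\lambda^4\log\lambda\to 0$. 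The tail sum tends to $0$ by the same continuity, and $L_{\tilde y}\to L_{x_0}$ since $\tilde y\to x_0$.

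The main obstacle is exactly this last subtlety: one must center the comparison at a local point $\tilde y$ rather than at the target $x_0$, so that every length scale in the estimate is $\dist(y,X)$, matching the $1/\dist(y,X)$ blow-up of $\normtv{\nu_y}$. Once this observation is in place, everything else — differentiability at $x_0$, continuity of $\tilde f$ (automatic from differentiability), and linearity of $(f,L)\mapsto\tilde f$ — is routine.
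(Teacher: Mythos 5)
Your proposal is correct and follows essentially the same route as the paper: the same extension formula $\tilde f(y)=\int_X[f(z)+L_z(y-z)]\,\d\mu_y(z)$ built on the regular random projection, the same use of $\nu_y(X)=0$, and the same key device of comparing $\d\tilde f_y$ with $L_{\tilde y}$ for an approximate nearest point $\tilde y$ so that every length scale is $\dist(y,X)$ and cancels the $1/\dist(y,X)$ blow-up of $\normtv{\nu_y}$. The only (harmless) deviation is that for differentiability at $x_0$ you base the remainder at the fixed point $x_0$ rather than at the integration variable, which lets you bypass the paper's \autoref{lem:integral-decay-of-R} at that step, while your continuity estimate reproduces that lemma's content inline.
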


First we prove a key lemma, that is an integral version of $R(x,y)=o(\abs{x-y})$, given our hypotesis on $R$.

\begin{lemma}\label{lem:integral-decay-of-R}
Let $\bar{\mu}:Y\to \Meas_+(X)$ be a weakly measurable map such that $\abs{\bar{\mu}_y}(X)\leq 1$ and there exists $C>0$ such that  $\supp \bar{\mu}_y\in B\bigl(y,C\dist (y,X)\bigr)$ for all $y\in Y$. Assuming the hypothesis of the \autoref{thm:whitney}, for all $x\in X$ we have
\[
\int_X \abs{R(z,x)} \d \bar{\mu}_y(z) = o(\abs{x-y})
\qquad \text{as $y\to x$}.
\]
\end{lemma}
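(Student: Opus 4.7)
The plan is to reduce everything to the continuous function $r:X\times X\to\setR$ defined by $r(z,w)=R(z,w)/\abs{z-w}$ for $z\neq w$ and $r(z,z)=0$; by the standing hypothesis of \autoref{thm:whitney} this $r$ is continuous on the whole $X\times X$. Then we simply write
\[
\int_X \abs{R(z,x)}\d\bar\mu_y(z) = \int_X \abs{r(z,x)}\cdot\abs{z-x}\d\bar\mu_y(z)
\]
and try to control both factors uniformly in $z\in\supp\bar\mu_y$ as $y\to x$.

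The key geometric observation comes from the support hypothesis on $\bar\mu_y$: since $x\in X$, we have $\dist(y,X)\leq\abs{y-x}$, so every $z\in\supp\bar\mu_y\subset B\bigl(y,C\dist(y,X)\bigr)$ satisfies
\[
\abs{z-x}\leq \abs{z-y}+\abs{y-x}\leq C\dist(y,X)+\abs{y-x}\leq (C+1)\abs{y-x}.
\]
This already takes care of the factor $\abs{z-x}$ and, more importantly, it forces $z\to x$ (uniformly on the support) as $y\to x$.

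For the factor $\abs{r(z,x)}$, I would invoke the continuity of $r$ at the diagonal point $(x,x)$: given $\eps>0$ there exists $\delta>0$ such that $\abs{r(z,x)}<\eps$ for every $z\in X$ with $\abs{z-x}<\delta$. Choosing $y$ close enough to $x$ so that $(C+1)\abs{y-x}<\delta$, the previous bound yields $\abs{r(z,x)}<\eps$ for \emph{every} $z\in\supp\bar\mu_y$. Combined with $\abs{\bar\mu_y}(X)\leq 1$, this gives
\[
\int_X \abs{R(z,x)}\d\bar\mu_y(z) \leq \eps (C+1)\abs{y-x}\cdot\abs{\bar\mu_y}(X) \leq \eps(C+1)\abs{y-x},
\]
which is exactly the desired $o(\abs{y-x})$ bound. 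The degenerate case $y=x\in X$ forces $\dist(y,X)=0$ and hence $\supp\bar\mu_y\subset\{x\}$, so the integral vanishes identically and requires no discussion.

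There is no real obstacle here: the only subtle point is checking that $y=x$ (and the points $z=x$ in the support, where $R$ is formally undefined) cause no trouble, and this is handled by the convention $r(x,x)=0$ that is built into the hypothesis of \autoref{thm:whitney}. The weak measurability of $y\mapsto\bar\mu_y$ is used only to guarantee that the integrals make sense; it plays no role in the estimate.
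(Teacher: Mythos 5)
Your proof is correct, and it takes a genuinely more direct route than the paper's. The paper first introduces a near-projection $\tilde y\in X$ of $y$ with $\abs{y-\tilde y}\leq 2\dist(y,X)$, splits
\[
\abs{R(z,x)}\leq\abs{R(x,\tilde y)}+\norm{L_z-L_x}\,\abs{\tilde y-x}+\abs{R(z,\tilde y)},
\]
and estimates the three resulting integrals separately, using in particular the continuity of $L$ for the middle term. Your argument bypasses both $\tilde y$ and $L$ entirely: the factorization $\abs{R(z,x)}=\abs{r(z,x)}\cdot\abs{z-x}$, combined with the two uniform bounds on the support of $\bar\mu_y$ — namely $\abs{z-x}\leq(C+1)\abs{y-x}$ and $\abs{r(z,x)}<\eps$ once $(C+1)\abs{y-x}<\delta$, the latter from continuity of $r$ at the diagonal point $(x,x)$ — gives the conclusion in three lines using only the mass bound $\bar\mu_y(X)\leq1$. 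What the paper's longer route buys is that its intermediate estimate for the third term controls $\int_X\abs{R(z,\tilde y)}\,\d\bar\mu_y(z)$ with a \emph{moving} base point $\tilde y=\tilde y(y)$, which is the form actually invoked later in the proof of \autoref{thm:whitney} (where the lemma as literally stated, with fixed base point $x$, does not directly apply); but your factorization adapts to that variant with no extra effort, since $\abs{z-\tilde y}\leq(C+2)\dist(y,X)\lesssim\abs{y-\tilde y}$ on the support and $r(z,\tilde y)\to0$ uniformly as both arguments converge to $x$. So nothing essential is lost, and your version is the cleaner proof of the stated lemma.
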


\begin{proof}
Let $\tilde{y}\in X$ be a point such that $\abs{y-\tilde{y}}\leq 2\dist(y,X)$. We can estimate
\begin{align*}
\abs{R(z,x)} &\leq \abs{R(z,x)-R(z,\tilde{y})} + \abs{R(z,\tilde{y})} \\
&= \abs{f(\tilde{y})-f(x)-L_z(\tilde{y}-x)} + \abs{R(z,\tilde{y})} \\
&\leq \abs{f(\tilde{y})-f(x)-L_x(\tilde{y}-x)}
	+ \abs{(L_z-L_x)(\tilde{y}-x)} + \abs{R(z,\tilde{y})} \\
&\leq \abs{R(x,\tilde{y})} + \norm{L_z-L_x}\abs{\tilde{y}-x}
	+ \abs{R(z,\tilde{y})}.
\end{align*}
We observe that
\begin{equation}\label{eq:3ineq}
\abs{\tilde{y}-x} \leq \abs{\tilde{y}-y} + \abs{y-x}
\leq 2\dist(y,X)+\abs{y-x} \leq 3\abs{y-x},
\end{equation}
therefore we have
\[
\int_X \abs{R(z,x)} \d\bar{\mu}_y(z)
\leq \underbrace{\vphantom{\int_X}\abs{R(x,\tilde{y})}}_A
	+ \underbrace{3\abs{y-x}\int_X \abs{L_z-L_x} \d\bar{\mu}_y(z)}_B
	+ \underbrace{\int_X \abs{R(z,\tilde{y})} \d\bar{\mu}_y(z)}_C .
\]
We analize each contribution separately.
\begin{itemize}
\item[($A$)] Using \eqref{eq:3ineq} and the continuity of $(x,y) \mapsto R(x,y)/\abs{x-y}$ we have
\[
\frac{\abs{R(x,\tilde{y})}}{\abs{x-y}} \leq
3 \frac{\abs{R(x,\tilde{y})}}{\abs{x-\tilde{y}}} \to 0.
\]

\item[($B$)] The term $\int_X \abs{L_z-L_x} \d\bar{\mu}_y(z)$ is infinitesimal as $y$ goes to $x$ because the map $z\mapsto\abs{L_z-L_x}$ is continuous and $\supp \bar{\mu}_y\in B\bigl(y,C\dist(y,X)\bigr)$.

\item[($C$)] We can estimate
\begin{align*}
\int_X \abs{R(z,\tilde{y})} \d\bar{\mu}_y(z)
&= \int_{X\cap B\bigl(y, C\dist(y,X)\bigr)}
	\abs{R(z,\tilde{y})} \d\bar{\mu}_y(z) \\
&= \int_{X\cap B\bigl(y, C\dist(y,X)\bigr)} \abs{z-\tilde{y}}
	\frac{\abs{R(z,\tilde{y})}}{\abs{z-\tilde{y}}} \d\bar{\mu}_y(z) \\
&\leq \int_X \abs{\tilde{y}-y}
		\frac{\abs{R(z,\tilde{y})}}{\abs{z-\tilde{y}}} \d\bar{\mu}_y(z) \\
&\hspace{2cm} + \int_{X\cap B\bigl(y, C\dist(y,X)\bigr)} \abs{y-z}
		\frac{\abs{R(z,\tilde{y})}}{\abs{z-\tilde{y}}} \d\bar{\mu}_y(z) \\
&\leq \abs{\tilde{y}-y} \int_X
		\frac{\abs{R(z,\tilde{y})}}{\abs{z-\tilde{y}}} \d\bar{\mu}_y(z) \\
&\hspace{2cm} + C\dist(y,X) \int_X
		\frac{\abs{R(z,\tilde{y})}}{\abs{z-\tilde{y}}} \d\bar{\mu}_y(z) \\
&\leq (2+C) \abs{y-x} \int_X
	\frac{\abs{R(z,\tilde{y})}}{\abs{z-\tilde{y}}}\d \bar{\mu}_y(z) .
\end{align*}
Finally we observe that again using \eqref{eq:3ineq} we have $\tilde{y} \to x$ and thanks to the continuity of $(x,y) \mapsto R(x,y)/\abs{x-y}$ we have
\[
\int_X \frac{\abs{R(z,\tilde{y})}}{\abs{z-\tilde{y}}} \d\bar{\mu}_y(z)
\leq \sup_{z\in B\bigl(y, C\dist(y,X)\bigr) \cap X}
	\frac{\abs{R(z,\tilde{y})}}{\abs{z-\tilde{y}}} \to 0. \qedhere
\]
\end{itemize}
\end{proof}

\begin{proof}[Proof of \autoref{thm:whitney}]
Let $\mu$ be a regular random projection as provided by \autoref{thm:regular-random-projection}. We define the extension of $f$ as
\begin{equation}\label{eq:C1-extension}
\tilde f(y) = \int_X [f(z)+L_z(y-z)] \d\mu_y(z).
\end{equation}

We first prove that the function $\tilde f$ is differentiable at any point $x\in X$ and that $\d f_x=L_x$. Indeed, we have
\[
\begin{split}
\abs{\tilde f(y) - \tilde f(x) - L_x(y-x)}
&= \abs*{\int_X [f(z)+L_z(y-z)]\d \mu_y(z)-f(x)-L_{x}(y-x)} \\
&\leq \abs*{\int_X [f(x)-f(z)-L_z(x-z)]\d\mu_y(z)} \\
&\qquad \qquad \qquad \qquad + \abs*{\int_X(L_z-L_x)(y-x)\d \mu_y(z)} \\
&\leq \int_X \abs{R(z,x)}\d\mu_y(z) + \abs{y-x}\int_X |L_z-L_x|\d \mu_y(z),
\end{split}
\]
the last term is $o(\abs{y-x})$ thanks to \autoref{lem:integral-decay-of-R} and the continuity of $L$.

Now we observe that $\tilde f\in C^1(Y\setminus X)$ and
\[
\d\tilde f_y = \int_X L_z \d\mu_y(z) + \int_X [f(z)+L_z(y-z)] \d\nu_y(z) \qquad \forall y\in Y\setminus X
\]
by a simple differentiation of \eqref{eq:C1-extension} and using \eqref{it:rrp-2} of \autoref{def:regular-random-projection}.

In order to conclude the proof we have to check that $y\mapsto\d\tilde f_y$ is a continuous map from $Y$ to $Y^*$. We already know that the differential of $\tilde f$ is continuous on the open set $Y\setminus X$ and when it is restricted to $X$, therefore it is enough to estimate $\abs{\d\tilde f_y-\d\tilde f_x}$ with $y\in Y\setminus X$ and $x\in X$. Fixing a point $\tilde y\in X$ such that $\abs{y-\tilde y}\leq 2\dist(y,X)$, we have
\[
\begin{split}
\abs{\d\tilde f_y - \d\tilde f_x}
&\leq \abs{\d\tilde f_y - \d\tilde f_{\tilde y}}
	+ \abs{\d\tilde f_{\tilde y} - \d\tilde f_x} \\
&= \abs{\d\tilde f_y - \d\tilde f_{\tilde y}}
	+ \abs{L_{\tilde y} - L_x}.
\end{split}
\]
Now we estimate the first term as
\[
\begin{split}
\abs{\d\tilde f_y - \d\tilde f_{\tilde y}}
&= \abs*{ \left( \int_X L_z \d\mu_y(z)
	+ \int_X [f(z)+L_z(y-z)] \d\nu_y(z)\right)-L_{\tilde{y}} } \\
&\leq \abs*{ \int_X f(z)+L_z(y-z) \d\nu_y(z) }
	+ \abs*{\int_X L_z\ d \mu_y(z)-L_{\tilde{y}} } \\
&\leq \abs*{\int_X f(z)+L_z(y-z) \d\nu_y(z) }
	+ \int_X \abs{L_z-L_{\tilde{y}}} \d\mu_y(z).
\end{split}
\]
Recalling \autoref{rmk:rrp-0-average} we have
\[
\begin{split}
\abs*{\int_X f(z) + L_z(y-z) \d\nu_y(z)}
&= \abs*{\int_X f(z)-f(\tilde{y})-L_z(z-\tilde{y}) \d\nu_y(z)} \\
	&\hspace{2cm}+ \abs*{\int_X L_z(y-\tilde{y}) \d\nu_y(z)} \\
&\leq \! \int _X \! \abs{R(z,\tilde{y})} \d\abs{\nu_y}(z)
	+ \abs{y-\tilde{y}} \int_X \abs{L_z-L_{\tilde{y}}} \d\abs{\nu_y}(z).
\end{split}
\]
Using the property \eqref{it:rrp-3} in \autoref{def:regular-random-projection} we can write $\abs{\nu_y}=\frac{C}{\dist(y,X)}\bar{\mu}_y$ and we notice that $\bar{\mu}_y$ satisfies the hypotesis in \autoref{lem:integral-decay-of-R}. Moreover recalling the assumption $\abs{y-\tilde{y}}\leq 2\dist (y,X)$ we have
\begin{multline*}
\abs*{\int_X f(z)+L_z(y-z) \d\nu_y(z)} \\
\leq \frac{C}{\dist(y,X)}\int_X \abs{R(z,\tilde{y})}\d\bar{\mu}_y(z)
+ \frac{C\abs{y-\tilde{y}}}{\dist(y,X)}\int_X\abs{L_z-L_{\tilde{y}}}
		\d\bar{\mu}_y(z) \\
\leq \frac{2C}{\abs{\tilde{y}-y}}
	\int_X \abs{R(z,\tilde{y})} \d\bar{\mu}_y(z) +
	2C\int_X \abs{L_z-L_{\tilde{y}}} \d\bar{\mu}_y(z).
\end{multline*}
Finally putting all together
\[
\abs{\d\tilde f_y - \d\tilde f_x} \leq
\abs{L_{\tilde y} - L_x} + (2C+1)\int_X \abs{L_z-L_{\tilde{y}}} \d\mu_y(z)
	+ \frac{2C}{\abs{\tilde{y}-y}} \int_X \abs{R(z,\tilde{y})} \d\bar{\mu}_y(z).
%2C\int_X |L_x-L_{\tilde{y}}|\d \bar{\mu}_y(x).
\]
Recalling $\abs{x-\tilde{y}}\leq 3\abs{x-y}$ and \autoref{lem:integral-decay-of-R} we conclude that $\abs{\d\tilde f_y - \d\tilde f_x} \to 0$ when $y$ goes to $x$.
This shows that $\d \tilde f$ is continuous also in every point of $X$ and concludes the proof.
\end{proof}

%
%\begin{proposition}\label{special RP induces C^1 extension}
%We assume that $f$, $L$ and $R$ are as in \autoref{thm:whitney}. If $\mu\in \Lip(Y; W_1(X))$ is special and $X$ locally compact then there exists $\bar{f}: Y\to \setR$ of class $C^1$ such that
%$\d \bar{f}_x=L_x$ in $X$, and $\bar{f}$ is done by the formula:
%\[
%\bar{f}(x)=\int_X f(y)+L_y(x-y) \d\mu_x(y).
%\]
%\end{proposition}
%We are ready to prove \autoref{special RP induces C^1 extension}.
%
%
%Now the proof of \autoref{thm:whitney} it's immediate:
%\begin{proof}
%By \autoref{special RP induces C^1 extension} it's enough to show that there exists a special random projection $\mu \in \Lip (B;W_1(X))$, but this follow immediately from the constructions in the previous section.

%\end{proof}

\phantomsection
\addcontentsline{toc}{section}{\refname}
\printbibliography

\end{document}